\documentclass[12pt]{article}
\ProvidesPackage{amstex}
\setlength{\parskip}{1em}
\frenchspacing
\usepackage{caption}
\captionsetup{width=.85\textwidth}
\usepackage{url}
\usepackage{color}
\usepackage{amssymb}
\usepackage{amsmath}
\usepackage{amsthm}
\usepackage{graphicx}
\usepackage{bbm}

\newtheorem{theorem}{Theorem}[section]
\newtheorem{lemma}[theorem]{Lemma}

\newtheorem{cor}[theorem]{Corollary}
\newtheorem{ex}[theorem]{Example}
\allowdisplaybreaks

\DeclareMathOperator{\MC}{MC}
\DeclareMathOperator{\diam}{diam}
\DeclareMathOperator{\FLAME}{FLAME}

\DeclareMathOperator{\odd}{odd}

\def\qed{\hfill
\ifhmode\unskip\nobreak\fi\quad\ifmmode\Box\else$\Box$\fi\\ }

\begin{document}

\title{Bipartite Communities}
\author{Kelly B. Yancey \thanks{University of Maryland and Institute for Defense Analyses/Center for Computing Sciences (IDA/CCS), kbyancey1@gmail.com}, Matthew P. Yancey \thanks{Institute for Defense Analyses/Center for Computing Sciences (IDA/CCS), mpyancey1@gmail.com}}
\date{\today}

\maketitle
%\markboth{K. Yancey, M. Yancey}{Bipartite Communities}

\begin{abstract}

A recent trend in data-mining is to find communities in a graph. Generally speaking, a community in a graph is a vertex set such that the number of edges contained entirely inside the set is ``significantly more than expected.'' 
These communities are then used to describe families of proteins in protein-protein interaction networks, among other applications. 
Community detection is known to be NP-hard; there are several methods to find an approximate solution with rigorous bounds. 

We present a new goal in community detection: to find good bipartite communities. 
A bipartite community is a pair of disjoint vertex sets $S$, $S'$ such that the number of edges with one endpoint in $S$ and the other endpoint in $S'$ is ``significantly more than expected.'' 
We claim that this additional structure is natural to some applications of community detection. 
In fact, using other terminology, they have already been used to study correlation networks, social networks, and two distinct biological networks. 
We will show how the spectral methods for classical community detection can be generalized to finding bipartite communities, and we will prove sharp rigorous bounds for their performance.
Additionally, we will present how the algorithm performs on public-source data sets.
\end{abstract}

\noindent \textit{Keywords}:  community detection, spectral graph theory, network analysis

\noindent \textit{2010 Mathematics Subject Classification}:05C90, 90C35

%%%%%%%%%%%%%%%%%%%%%%%%%%%%%%%%%%%%%%%%%%%%%%%%%%%%%%%%%%%%%%%%%%%%%%%%%%%%%%%%%%%%%%%%%%%%%%%%%%%%%%%%%%%%%%%%%%%%%%%%%%%%%%%
%%%%%%%%%%%%%%%%%%%%%%%%%%% MOTIVATION %%%%%%%%%%%%%%%%%%%%%%%%%%%%%%%%%%%%%%%%%%%%%%%%%%%%%%%%%%%%%%%%%%%%%%%%%%%%%%%%%%%%%%%%
%%%%%%%%%%%%%%%%%%%%%%%%%%%%%%%%%%%%%%%%%%%%%%%%%%%%%%%%%%%%%%%%%%%%%%%%%%%%%%%%%%%%%%%%%%%%%%%%%%%%%%%%%%%%%%%%%%%%%%%%%%%%%%%
\section{Introduction}

A recent trend in data-mining is to find communities in a graph.
Generally speaking, a community in a graph is a vertex set such that the number of edges contained entirely inside the set is ``significantly more than expected.''
These communities are then used to describe cliques in social networks, families of proteins in protein-protein interaction networks, construct groups of similar products in recommendation systems, among other applications.
For a survey on the state of community detection see \cite{F}.
There are multiple measurements that assess how the number of edges contained in a vertex set exceeds what is expected, and each is considered legitimate for a subset of applications.
Finding an optimum set of vertices is \textbf{NP}-hard for most of these measurements, with a few exceptions \cite{TBGGT}.
The measurement that will be investigated in this paper is \emph{conductance}.

Let $G = (V,E)$ be a weighted undirected graph.  
For shorthand, $ij \in E$ will mean $u_iu_j \in E$.
Also, $ij \in E^{<}$ will represent $ij \in E$ and $i < j$.
The adjacency matrix $A$ is the matrix $(w_{ij})$, where $w_{ij}$ is the weight on the edge $ij$ and $w_{ij}=0$ if $ij \notin E$. 
This paper will operate on the assumption that $w_{ij} > 0$ for all edges $ij$, although this assumption is not ubiquitous.
The degree of a vertex $u_j$ is $d(u_j) = \sum_{ij \in E} w_{ij}$, and the degree matrix $D$ is a diagonal matrix with entries $d(u_i)$.
We assume that our graphs have no isolated vertices; equivalently, $d(u_i) > 0$ for all $i$. 
For the rest of this paper, we will assume that our graphs have $n$ vertices and $e$ edges, unless otherwise specified.

The conductance of a subset of vertices $S$, denoted by $\phi_G(S)$, is the sum of the weights on the edges incident with exactly one vertex of $S$ divided by the sum of the degrees of the vertices in $S$.
Typically it is assumed that the sum of the degrees of the vertices in $S$ is at most half the sum of the degrees of all vertices in $G$, as one can alternatively consider the set $\bar{S} = V - S$.
A \emph{stub} is a half edge - for each edge $uv$, there is a stub incident with $u$ and a stub incident with $v$.
Each stub is given a weight equal to the weight on the edge containing said stub.
Let $C(S)$ be the set of stubs incident with $S$ (the assigned or ``colored'' stubs).
Let $B(S)$ be the set of stubs $s$ from an edge $e$ such that $s$ is incident with $S$, but the other stub from $e$ is not incident with $S$ (the ``bad'' stubs).
For a set of edges and stubs $T$, let $\|T\|$ be the sum of the weights on the stubs plus twice the sum of the weights on the edges.
Using this notation, $\phi_G(S) = \|B(S)\|/\|C(S)\|$.

The \emph{combinatorial Laplacian} is $L' = D - A$ and the \emph{normalized Laplacian} is $L = D^{-1/2}L'D^{-1/2}$.
If we define a vector $x:=x(S)$ such that $x_i = 1$ if $u_i \in S$ and $x_i = 0$ otherwise, then the \emph{Rayleigh quotient} of $x$ is the same as the conductance of $S$:
$$ \mathcal{R}_G(x) := \frac{\sum_{ij \in E^{<}} w_{ij}(x_i - x_j)^2}{\sum_i x_i^2 d(u_i)} = \phi_G(S).$$
Hence, minimizing the value of $\frac{\sum_{ij \in E^{<}} w_{ij}(x_i - x_j)^2}{\sum_i x_i^2 d(u_i)}$ over all vectors $x \in \mathbb{R}^n$ is considered a \emph{continuous relaxation} of the problem of finding a good community.
Note that if $y = D^{-1/2}x$, then $\frac{y^TLy}{y^Ty} = \mathcal{R}_G(x)$.
$L$ and $L'$ are positive semidefinite, and $L$ has eigenvectors with eigenvalues $0 = \lambda_1 \leq \cdots \lambda_n \leq 2$.

The most famous method to ``round'' a solution of the continuous relaxation into a good solution to the original discrete problem is the Cheeger Inequality (see \cite{C}).
Let $e$ be an eigenvector of $L$ corresponding to eigenvalue $\lambda$.  
Let $S_{e,t}$ be the vertex set $\{u_i \in V: d(u_i)^{-1/2}e(u_i) < t\}$.
Cheeger's Inequality states that under these conditions there exists $t'$ such that $\phi_G(S_{e,t'}) \leq \sqrt{2\lambda}$.

There have been multiple heuristic attempts to generalize Cheeger's Inequality by using several eigenvectors, see \cite{VM} for a survey of such algorithms.
An approach with theoretical rigor was found very recently by two groups independently : Louis, Raghavendra, Tetali, and Vempala \cite{LRTV} and Lee, Gharan, and Trevisan \cite{LGT}.
They both showed that for any disjoint $k$ communities $(S_i)_{i=1}^k$ the $\max_i \phi_G(S_i) \geq \lambda_k/2$.

\begin{theorem}[\cite{LGT}, \cite{LRTV}]\label{trevisan thm}
There exist disjoint vertex sets $S_1, \ldots, S_k$ such that for each $i$, we have that $\phi_G(S_i) \leq O(\sqrt{\log(k)\lambda_{kC}})$ for some absolute constant $C$.
Furthermore, there exist disjoint sets $S_1, \ldots, S_k$ such that for each $i$, we have that $\phi_G(S_i) \leq O(k^2 \sqrt{\lambda_{k}})$
\end{theorem}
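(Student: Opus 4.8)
\section*{Proof proposal}

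The plan is to translate the combinatorial partitioning problem into a geometric one via the \emph{spectral embedding}, cut the embedded image into $\Omega(k)$ low-diameter regions, convert each heavy region into a nonnegative test function with small Rayleigh quotient, and then run the one-set Cheeger rounding on each such function.

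First I would fix a dimension $m$ --- $m=k$ for the weaker (``furthermore'') bound and $m=Ck$ for a suitable absolute constant $C$ in the first bound --- and take $f_1,\dots,f_m\in\mathbb{R}^n$ with $\mathcal{R}_G(f_j)=\lambda_j$ and $\sum_i d(u_i)f_j(u_i)f_{j'}(u_i)$ equal to $1$ when $j=j'$ and $0$ otherwise (apply $D^{-1/2}$ to orthonormal eigenvectors of $L$). Set $F(u_i)=(f_1(u_i),\dots,f_m(u_i))\in\mathbb{R}^m$ and $\mu(u_i)=d(u_i)\|F(u_i)\|^2$. Orthonormality gives the isotropy identity $\sum_i d(u_i)F(u_i)F(u_i)^{T}=I_m$, hence $\sum_i\mu(u_i)=m$, and the Dirichlet bound $\sum_{ij\in E^{<}}w_{ij}\|F(u_i)-F(u_j)\|^2=\sum_{j\le m}\lambda_j\le m\lambda_m$. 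Isotropy is the crucial \emph{spreading} property: for any direction $z$ and threshold $\tau$, the $\mu$-mass of vertices whose normalized image $F(u_i)/\|F(u_i)\|$ lies in the cap $\{v:\langle v,z\rangle\ge\tau\}$ is at most $\tau^{-2}$, so when $m\gg k$ the image cannot concentrate on few directions. It now suffices to produce functions $\psi_1,\dots,\psi_k:V\to\mathbb{R}_{\ge0}$ with pairwise disjoint supports and $\mathcal{R}_G(\psi_i)\le\eta$ for all $i$: the sweep-cut argument behind Cheeger's inequality, applied to each $\psi_i$ separately, returns $S_i\subseteq\operatorname{supp}(\psi_i)$ with $\phi_G(S_i)\le\sqrt{2\eta}$, and the $S_i$ inherit disjointness from the $\psi_i$. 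The goal is $\eta=O(\lambda_{Ck}\log k)$ for the first bound and $\eta=O(k^{4}\lambda_k)$ for the second.

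To construct the $\psi_i$ I would randomly partition $\mathbb{R}^m$ (equivalently the sphere, after radially normalizing $F$) into regions of small diameter so that each vertex lands ``$\delta$-deep'' in its region --- the ball of radius $\delta\|F(u_i)\|$ about $F(u_i)$ stays inside the region --- with probability at least $1/2$. Spreading then forces at least $k$ regions $R_1,\dots,R_k$ to carry expected deep $\mu$-mass $\gtrsim m/k$; on each, define the ``tent'' $\psi_i(u)=\|F(u)\|\cdot\min\{1,\delta^{-1}\operatorname{dist}(F(u)/\|F(u)\|,\partial R_i)\}$, which is supported in $R_i$, so the supports are disjoint. The denominator $\sum_i d(u_i)\psi_i(u_i)^2$ is at least the deep mass of $R_i$, i.e. $\gtrsim m/k$; the numerator is bounded using the Lipschitz constant $O(\delta^{-1})$ of the tent together with the Dirichlet bound, giving (up to the per-edge overlap of the regions) $O(\delta^{-2})\cdot m\lambda_m$, hence $\mathcal{R}_G(\psi_i)=O(\delta^{-2}k\lambda_m/m)$. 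The two bounds are two regimes of this scheme: with $m=Ck$ one can partition the sphere with $\delta=\Omega(1/\sqrt{\log k})$ and bounded overlap (via random one-dimensional projections / a randomly shifted grid adapted to the spreading structure), yielding $\eta=O(\lambda_{Ck}\log k)$; with only $m=k$ eigenfunctions one must instead iterate a cruder ball-carving (peel off the heaviest small ball, recurse), which degrades both $\delta$ and the spreading bound by $\operatorname{poly}(k)$ factors and gives $\eta=O(k^{4}\lambda_k)$.

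The main obstacle is precisely this localization step, and concretely the poor padded decomposability of Euclidean space: a naive randomly shifted cube in $\mathbb{R}^m$ keeps a point only $\Omega(1/\sqrt m)$-deep, which inflates $\delta^{-2}$ to $\Theta(m)=\Theta(k)$ and yields nothing stronger than $O(k\sqrt{\lambda_k})$. Reaching the $\sqrt{\log k}$ dependence requires working on the sphere and exploiting isotropy so that only $\sim\log k$ radial/angular scales matter rather than all $m$ of them, plus a random-projection partition that keeps the number of regions meeting any short edge bounded. Once the $\psi_i$ are in place, the remaining ingredients --- deriving spreading from orthonormality, checking the tent Lipschitz bound, and the final Cheeger threshold rounding with disjointness of the $S_i$ --- are routine.
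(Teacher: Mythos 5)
Theorem \ref{trevisan thm} is quoted from \cite{LGT,LRTV}; the paper itself gives no proof of it, so the relevant comparison is with the cited argument and with the paper's own machinery for the bipartite analogue in Section 2. Your sketch is essentially the Lee--Gharan--Trevisan route: spectral embedding $F$, spreading via isotropy (your cap bound is the same computation as Lemma \ref{spreading}), a random low-diameter partition with a padding guarantee, disjointly supported ``tent'' functions with Lipschitz constant $O(\delta^{-1})$, a one-set Cheeger sweep on each, and Gaussian dimension reduction with $m=Ck$ eigenvectors to reach $\sqrt{\log k}$. That is a legitimate proof of the statement and matches \cite{LGT}. The paper's own argument (Theorem \ref{all the work} and the proofs of Theorem \ref{main}.A--B) is deliberately a hybrid and differs at the localization step: instead of padded decompositions and tent functions, it uses the Charikar--Chekuri--Goel--Guha--Plotkin partition (Lemma \ref{partition happy}), whose guarantee is only a separation probability $\mathbb{P}[P(u)\neq P(v)]\leq 2\sqrt{k}\,d_M(u,v)/\Delta$, merges small-mass parts, and then projects each part onto a single well-chosen eigencoordinate and runs a Trevisan-type signed sweep (Theorem \ref{one dim algorithm}); the $\sqrt{\log k}$ version again comes from projecting to $h=O(\log k)$ Gaussian dimensions before partitioning. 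Your tent-function construction buys coordinate-free localization and per-region Cheeger rounding, which is exactly what is needed for Theorem \ref{trevisan thm}; the paper's coordinate-projection variant is what survives the passage to signed/bipartite sweeps, at the cost of the bookkeeping with the $\alpha_i$ and the expectation-over-thresholds argument.

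Two steps in your outline deserve to be made explicit, since as stated they hide the only nontrivial bookkeeping. First, ``spreading forces at least $k$ regions with deep $\mu$-mass $\gtrsim m/k$'' does not follow region-by-region from an upper bound on masses: you need to note that padding gives expected total deep mass $\geq m/2$, fix a partition achieving this, and then count (or merge small parts, as in the proof of Theorem \ref{main}.A) to extract $k$ parts that are simultaneously heavy. Second, the Rayleigh-quotient bound $\mathcal{R}_G(\psi_i)\leq\eta$ must hold for all $k$ chosen regions at once; since you only control the sum of the localized numerators (each edge meets at most two supports), you should either produce $2k$ heavy regions and keep the $k$ best, or accept the harmonic-type degradation $\eta_i \propto 1/(k+1-i)$ exactly as in Theorem \ref{all the work}. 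Both fixes are standard and are carried out in \cite{LGT} and, in the paper's setting, in Section 2.
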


We consider a new goal in community detection: finding good \emph{bipartite communities}.
A bipartite community is a pair of disjoint vertex sets $S, S'$ such that the number of edges with one endpoint in $S$ and the other endpoint in $S'$ is ``significantly more than expected.''
To this end, we will define a measurement of bipartite conductance.
Let $B'(S,S')$ be the set of edges entirely contained in $S$ or entirely contained in $S'$, and let $\tilde{B}(S, S') = B(S \cup S') \cup B'(S,S')$.
The bipartite conductance of $S,S'$ is $\tilde{\phi}_G(S,S') = \|\tilde{B}(S, S')\| / \|C(S \cup S')\|$.
Because $B(S \cup S') \subseteq \tilde{B}(S,S')$, it clearly follows that $\tilde{\phi}_G(S,S') \geq \phi_G(S \cup S')$, so if $S,S'$ is a good bipartite community then $S \cup S'$ is a good community.
Qualitatively, a good bipartite community is a good community with additional structure, and finding a good bipartite community is a refinement of finding a good community.

We claim that this additional structure is natural to some applications of community detection.
In fact, using other terminology, they have already been used to study protein interactions \cite{LLLW} and group-versus-group antagonistic behavior \cite{ZLL,LSPZL} in online social settings (also known as a ``flame war'').
The study of correlation clustering (see the introduction to \cite{PM} for a survey; also studied under the name ``community detection in signed graphs'' \cite{KSLLLA,WYWLZ,G}) is the special case where an edge may represent similarity \emph{or dissimilarity}, and a recent approach by Atay and Liu \cite{AL} involved bipartite communities.
There are many more possible applications: a network of spammers and their targets will display bipartite behavior.
Another application would be to isolate a regional network of airports inside a global graph of air traffic, where the two sets represent major hub airports and small local airports (the assumptions being that small local airports almost exclusively have flights to geographically close hub airports and major hub airports send relatively few flights to other major hub airports that are geographically close).
Finally, we suggest that it is natural to look for a bipartite relationship when examining co-purchasing networks.
In this case, each side of the community would be different brands of the same product - people are unlikely to purchase two versions of the same product in one shopping trip.

The benefit of looking for the additional structure of a bipartite community in these scenarios is that false positives will be weeded out.
For example, an algorithm for classical community detection algorithms is likely to return the set of international airports at the core of the air transportation network as a community instead of regional networks, because the core of international hubs form a stronger ``Rich-Club'' than even the Internet backbone \cite{CFSV}.
Another benefit is the two-sided labels a bipartite community gives to its members.

Kleinberg considered a related problem \cite{K} for directed graphs when he developed the famous \emph{Hyperlink Induced Topic Search} (HITS) algorithm to find results for a web search query.
His algorithm looked to label a subset of webpages as ``Hubs'' or ``Authorities,'' with the only criteria for such labeling being that Hub webpages have many links to Authority webpages.
The HITS algorithm is then spectral clustering using the eigenvectors of $A^TA$.
Kleinberg's algorithm is famous for its strength, but it does have a known issue of reporting popular websites instead of websites that are popular \emph{in reference to the search query}.
This is because the large eigenvectors of an adjacency matrix are dominated by vertices of high degree \cite{HST}, and the normalized Laplacian is known to present results that better match the topology of the graph.

We take a moment here to use one final application as an example that will help distinguish a bipartite community from a \emph{bicluster}.
A bicluster is a classical community during the special case when the underlying graph $G$ is bipartite.
For example, Kluger, Basri, Chang, and Gerstein \cite{KBCG} find biclusters in a bipartite graph that matched genes to different environmental conditions that affect how those genes are expressed.
On the other hand, Bellay et. al. \cite{Minnesota} found bipartite communities in a graph where genes are matched \emph{to each other} when they affect the expression of each other.
To be specific: the rate of growth of yeast colonies is modified by a known rate when one of the genes in the set is deleted; an edge is added between two genes when the observed modification to the rate of growth after both genes are deleted is statistically different from the product of the modifications from each independent gene deletion.
This particular study of gene interaction is called \emph{double mutant combinations}, and bipartite communities are suggested to correspond to redundant pathways \cite{Minnesota}.

We will investigate the existence of bipartite communities in several public source data sets, including the double mutant combination network for yeast cells.
We will also look for bipartite communities in a network of political blogs; our results will match Kleinberg's model for the internet.

We will show that bipartite communities can be found using the largest eigenpairs of $L$.
This is not the first time that the largest eigenpairs of $L$ and $L'$ have been studied.
They are frequently seen as duals to the small eigenpairs of $L$ and $L'$ (see \cite{BJ} and \cite{L}).
They have also been the focus of independent interest because of the related problem of MAX-CUT.
The problem of MAX-CUT is to find a vertex set $S$ such that $\|B(S)\|$ is maximized (and equivalently $\|B'(S, V-S)\|$ is minimized).

Let $\MC = \max_S \|B(S)\|$.
If $\lambda_n'$ is the largest eigenvalue of $L'$, then $\|B(S)\| \leq \lambda_n' \frac{|S|(n-|S|)}{n}$ \cite{MP}.
It follows that $\MC  \leq \frac{\lambda_n' n}{4}$.
Certain strengthenings of this are possible, giving tight results for specific classes of graphs \cite{DP}.
There is a similar proof to show that $\MC \leq e\lambda_n/2$.

One of the most recent results in approximate solutions to MAX-CUT is from Trevisan, who recursively seeks out bipartite communities and returns a set of vertices that is the union of one of the two vertex sets from each bipartite community.
If $\tilde{x}:=\tilde{x}(S,S')$ for some bipartite community $S,S'$ where $\tilde{x}_{i} = 1$ if $u_i \in S$, $\tilde{x}_i = -1$ if $u_i \in S'$, and $\tilde{x}_i = 0$ otherwise, then 
$$ 2 - \mathcal{R}_G(\tilde{x}) = \frac{\|B(S \cup S')\| + 2\|B'(S,S')\|}{\|C(S \cup S')\|} = c_S \tilde{\phi}_G(S,S'),\ c_S \in [1,2]$$
while we had equality for classical communities.
It follows that for any $r$ disjoint bipartite communities $S_i, S_i'$, we have the $\max_i \tilde{\phi}_G(S_i,S_i') \geq 1 - \lambda_{n+1-r}/2$.

\begin{theorem}[Trevisan \cite{T}] \label{one dim algorithm}
Let $e$ be an eigenvector of $L$ corresponding to eigenvalue $\lambda$.  
For $t > 0$, let $S_{e,t}$ be the vertex set $\{u_i \in V: d(u_i)^{-1/2}e(u_i) < -t\}$ and $S_{e,t}'$ be the vertex set $\{u_i \in V: d(u_i)^{-1/2}e(u_i) > t\}$ .
Under these conditions, there exists a $t'$ such that $\tilde{\phi}_G(S_{e,t'}, S_{e,t'}') \leq \sqrt{2(2-\lambda)}$.
\end{theorem}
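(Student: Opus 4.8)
The plan is to imitate the proof of Cheeger's inequality, but with the spectrum ``folded'': where the classical rounding exploits that $\mathcal{R}_G(x)$ is small, here we exploit that $2-\mathcal{R}_G(x)$ is small. Put $x_i := d(u_i)^{-1/2}e(u_i)$, so that $\mathcal{R}_G(x)=\lambda$, and rescale $e$ if necessary so that $\max_i|x_i|=1$ (this changes neither $\mathcal{R}_G$ nor, up to rescaling $t$, the sets in the statement). From $(x_i+x_j)^2 = 2(x_i^2+x_j^2)-(x_i-x_j)^2$ together with $\sum_i d(u_i)x_i^2 = \sum_{ij\in E^<}w_{ij}(x_i^2+x_j^2)$ one obtains the key identity
\[
\sum_{ij\in E^<} w_{ij}(x_i+x_j)^2 \;=\; (2-\lambda)\sum_i d(u_i)x_i^2,
\]
which plays the role that ``$\sum_{ij\in E^<} w_{ij}(x_i-x_j)^2 = \lambda\sum_i d(u_i)x_i^2$'' plays in the usual argument. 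Now draw a threshold $t\in(0,1)$ at random with density $2t$ and form $S_{e,t}$ and $S_{e,t}'$ as in the statement; then $\Pr[\,u_i\in S_{e,t}\cup S_{e,t}'\,]=\Pr[\,|x_i|>t\,]=x_i^2$, and conditioned on being chosen, $u_i$ goes into $S_{e,t}$ when $x_i<0$ and into $S_{e,t}'$ when $x_i>0$.

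Next I would evaluate the two quantities in $\tilde{\phi}_G$. At once $\mathbb{E}\big[\,\|C(S_{e,t}\cup S_{e,t}')\|\,\big] = \sum_i d(u_i)x_i^2$. The real work is $\mathbb{E}\big[\,\|\tilde{B}(S_{e,t},S_{e,t}')\|\,\big]$, which I would compute edge by edge, remembering that an edge swallowed entirely by one side contributes $2w_{ij}$ to the norm (as a member of $B'$) while a lone crossing stub contributes $w_{ij}$. A short case analysis on the signs of $x_i,x_j$ shows that an edge whose endpoints have the same sign contributes $w_{ij}(x_i^2+x_j^2)$ in expectation, while an edge whose endpoints have opposite signs contributes only $w_{ij}\,|x_i^2-x_j^2|$ --- because for such an edge the event $t<\min(|x_i|,|x_j|)$, on which one endpoint lands in $S_{e,t}$ and the other in $S_{e,t}'$, puts the edge in neither $B(S_{e,t}\cup S_{e,t}')$ nor $B'$ and so is harmless. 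In both cases the per-edge expectation is at most $w_{ij}\,|x_i+x_j|\,(|x_i|+|x_j|)$, so
\[
\mathbb{E}\big[\,\|\tilde{B}(S_{e,t},S_{e,t}')\|\,\big] \;\le\; \sum_{ij\in E^<} w_{ij}\,|x_i+x_j|\,(|x_i|+|x_j|).
\]

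Finally I would close with Cauchy--Schwarz and an averaging argument. Cauchy--Schwarz on the right-hand side gives
\[
\mathbb{E}\big[\,\|\tilde{B}\|\,\big] \;\le\; \Big(\sum_{ij\in E^<} w_{ij}(x_i+x_j)^2\Big)^{1/2}\Big(\sum_{ij\in E^<} w_{ij}(|x_i|+|x_j|)^2\Big)^{1/2},
\]
in which the first factor equals $\big((2-\lambda)\sum_i d(u_i)x_i^2\big)^{1/2}$ by the key identity and the second is at most $\big(2\sum_i d(u_i)x_i^2\big)^{1/2}$ because $2|x_i||x_j|\le x_i^2+x_j^2$. Hence $\mathbb{E}\big[\,\|\tilde{B}(S_{e,t},S_{e,t}')\| - \sqrt{2(2-\lambda)}\,\|C(S_{e,t}\cup S_{e,t}')\|\,\big]\le 0$, so some threshold $t'\in(0,1)$ satisfies $\|\tilde{B}(S_{e,t'},S_{e,t'}')\| \le \sqrt{2(2-\lambda)}\,\|C(S_{e,t'}\cup S_{e,t'}')\|$; since $t'<\max_i|x_i|$ forces $\|C(S_{e,t'}\cup S_{e,t'}')\|>0$, dividing yields $\tilde{\phi}_G(S_{e,t'},S_{e,t'}')\le\sqrt{2(2-\lambda)}$.

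The one genuinely delicate step is the edge-by-edge accounting: getting the $B$-versus-$B'$ weights right, seeing that an opposite-sign edge is cheaper precisely because a crossing edge is never ``bad,'' and then recognising that $w_{ij}\,|x_i+x_j|\,(|x_i|+|x_j|)$ is the uniform upper bound that both regimes admit and that it is exactly what Cauchy--Schwarz needs. Once that bound is in hand, the rest is the same two-line computation as in Cheeger's inequality.
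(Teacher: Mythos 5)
Your proof is correct. The paper itself gives no proof of Theorem \ref{one dim algorithm} (it is quoted from Trevisan \cite{T}), but your argument is the standard one: the identity $\sum_{ij\in E^<}w_{ij}(x_i+x_j)^2=(2-\lambda)\sum_i d(u_i)x_i^2$, the threshold drawn with density $2t$, the observation that an opposite-sign edge is never bad, the per-edge bound $w_{ij}|x_i+x_j|(|x_i|+|x_j|)$, and Cauchy--Schwarz are exactly the ingredients the paper itself reuses and generalizes in the proof of Theorem \ref{all the work} (compare its claim $\mathbb{E}_t|y_i+y_j|\leq\hat{\alpha}^{-1}\alpha_i^{-1}|f_j(u)+f_j(v)|(|f_j(u)|+|f_j(v)|)$), so there is nothing to flag; your closing remark that the max-normalization keeps $\|C(S_{e,t}\cup S_{e,t}')\|>0$ for every $t\in(0,1)$ correctly handles the only degenerate case.
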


Liu \cite{L} showed that there exists $k$ disjoint bipartite communities that satisfy $\tilde{\phi}_G(S, S') \leq O\left(k^3 \sqrt{2 - \lambda_{n+1-k}}\right)$.
The main theoretical work of this paper is to strengthen this bound.

\begin{theorem} \label{main}
Fix a value for $k$.
There exists disjoint sets $S_1, S_1', S_2, S_2', \ldots, S_r, S_r'$ such that for any graph $G$ and each $1 \leq i \leq r$,\\
(A) $r = k$ and $\tilde{\phi}_G(S_i, S_i') \leq  \frac{2(8k + 1)(4k -1)}{k+1 - i} \sqrt{\frac{\sum_{1 \leq i \leq k}(2 - \lambda_{n+1 - i})}{k} }$.\\
(B) $r \leq k/2$ and $\tilde{\phi}_G(S_i, S_i') \leq  \frac{10^{1.5}(1280\sqrt{3\ln(200k^2)}+4)k}   {9\left(\frac{k}{2} + 1 - i\right)}     \sqrt{\frac{\sum_{1 \leq i \leq k}(2 - \lambda_{n+1 - i})}{k} }$.
\end{theorem}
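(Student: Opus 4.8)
\emph{Proof proposal.} The plan is to reduce the problem to a refined higher-order Cheeger inequality for ordinary conductance, applied to the \emph{bipartite double cover} $\hat G$, which converts bipartite structure in $G$ into community structure in $\hat G$. Recall that $\hat G$ has vertex set $\{u_i^+, u_i^- : u_i \in V\}$ with an edge $u_i^+ u_j^-$ of weight $w_{ij}$ for every $ij \in E$. Working directly from the definitions, if $\hat S$ is any vertex set of $\hat G$ and we put $S = \{u_i : u_i^+ \in \hat S\}$, $S' = \{u_i : u_i^- \in \hat S\}$, then each edge of $G$ internal to $S$ or to $S'$ contributes weight $2w_{ij}$ to the boundary of $\hat S$, each edge from $S\cup S'$ to its complement contributes $w_{ij}$, each edge from $S$ to $S'$ contributes $0$, and $\|C(\hat S)\| = \|C(S\cup S')\|$; hence $\phi_{\hat G}(\hat S) = \tilde\phi_G(S,S')$ when $S\cap S'=\emptyset$. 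Moreover the normalized Laplacian of $\hat G$ has spectrum $\{\lambda_i\}\cup\{2-\lambda_i\}$, with the eigenspace for $2-\lambda$ spanned by the \emph{antisymmetric lifts} $\hat v$ (satisfying $\hat v(u^+) = -\hat v(u^-) = v(u)$) of eigenvectors $v$ of $L$ with eigenvalue $\lambda$. Equivalently and more conveniently, we may stay on $G$ and work with the \emph{signless Rayleigh quotient} $\mathcal Q_G(x) := \frac{\sum_{ij\in E^<} w_{ij}(x_i+x_j)^2}{\sum_i x_i^2 d(u_i)} = 2 - \mathcal R_G(x)$, whose small values on the span of the top eigenvectors of $L$ certify good bipartite communities through the rounding of Theorem \ref{one dim algorithm} in the form $\tilde\phi_G(S_{x,t'},S'_{x,t'}) \le \sqrt{2\,\mathcal Q_G(x)}$.

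The heart of the argument is a localization step. Let $W$ be the span of the eigenvectors of $L$ for $\lambda_n, \lambda_{n-1}, \ldots, \lambda_{n+1-k}$; every $x\in W$ has $\mathcal Q_G(x)\le 2-\lambda_{n+1-k}$, and by invariance of trace a suitable basis of $W$ has signless Rayleigh quotients summing to exactly $\sum_{i=1}^k(2-\lambda_{n+1-i})$. From $W$ I would produce real vectors $f_1,\dots,f_r$ on $V(G)$ whose antisymmetric lifts have pairwise well-separated supports in $\hat G$ — separated both from each other and from their images under the swap $\iota: u^+\leftrightarrow u^-$ — so that the threshold sets $S_{f_i,t_i}$ and $S'_{f_i,t_i}$ of Theorem \ref{one dim algorithm} form $2r$ pairwise-disjoint subsets of $V(G)$, while keeping each $\mathcal Q_G(f_i)$ small. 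Tracking how much of the budget $\sum_{i=1}^k(2-\lambda_{n+1-i})$ is consumed by the first $i-1$ sets, the $i$-th vector may be drawn from a subspace of dimension $k+1-i$ on which the signless quotient is at most $\frac{k}{k+1-i}\cdot\frac1k\sum_{i=1}^k(2-\lambda_{n+1-i})$; the localization then inflates this by a factor that is polynomial in $k$ in the deterministic partition-of-unity / ball-growing implementation of \cite{LRTV} (the engine behind the $O(k^2\sqrt{\lambda_k})$ bound of Theorem \ref{trevisan thm}), which can be carried out so that $\iota$ permutes the pieces, recovering exactly $r=k$ bipartite communities and giving part (A) after the constants are collected into $2(8k+1)(4k-1)$. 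For part (B) I would instead run the randomized Gaussian-projection rounding of \cite{LGT}, \cite{LRTV} (the engine behind the $O(\sqrt{\log k}\,\lambda_{Ck})$ bound), replacing the polynomial localization loss with an $O(\sqrt{\log k})$ factor — surfacing as the $\sqrt{3\ln(200k^2)}$ term — at the price that only a constant fraction of the pieces survive and each must be paired with its $\iota$-image, so one keeps $r\le k/2$. In both cases, reindexing so that $\mathcal Q_G(f_1)\le\cdots\le\mathcal Q_G(f_r)$ and applying Theorem \ref{one dim algorithm} turns the per-index quotient bound into the stated per-index conductance bound with prefactor $\tfrac1{r+1-i}$ and averaged radicand $\sqrt{\tfrac1k\sum_{i=1}^k(2-\lambda_{n+1-i})}$.

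I expect two places to be the main obstacles. The first, and the genuinely new point relative to the classical higher-order Cheeger inequalities, is enforcing that the $2r$ sets be pairwise disjoint \emph{in $G$}: disjointness of the pieces in $\hat G$ is free from the partitioning, but a piece $\hat S_i$ and a reflected piece $\iota(\hat S_j)$ can overlap unless the whole construction is $\iota$-equivariant and one argues that an antisymmetric test vector truly separates $u^+$ from $u^-$; this is exactly what forces the bipartite setting either to lose a factor $2$ in the number of communities (part (B)) or to use the more hands-on equivariant localization (part (A)). The second obstacle is quantitative: driving the localization loss down to the explicit constants in the statement — in particular making the $\tfrac1{k+1-i}$ itself, rather than its square root, emerge — requires the peeling/averaging accounting to be tight at every step and the base-case rounding of Theorem \ref{one dim algorithm} to be applied only after the supports have been made essentially disjoint, so that no Cheeger-type loss beyond the single square root is incurred. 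Everything else should be routine: the spectral identity for $\hat G$, the Rayleigh-quotient bookkeeping, and propagating the constants of \cite{LRTV} and \cite{LGT} through the double-cover reduction.
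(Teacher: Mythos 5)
Your reduction to the double cover is sound as far as it goes: the identity $\phi_{\hat G}(\hat S)=\tilde\phi_G(S,S')$ for $S\cap S'=\emptyset$ and the spectral correspondence $\{\lambda_i\}\cup\{2-\lambda_i\}$ are correct, and your overall plan (localize the mass of the top-$k$ eigenspace into pieces, then round each piece by a one-dimensional signed threshold) is the same in spirit as the paper's. But the step you yourself flag as the main obstacle is exactly the step you never supply, and it is the only genuinely new content of the theorem. Asserting that the localization of \cite{LRTV} or \cite{LGT} ``can be carried out so that $\iota$ permutes the pieces'' does not give disjointness in $G$: even if $\iota$ permutes the parts, a part $\hat S_i$ containing $u^+$ and a different part $\hat S_j$ containing $u^-$ project to overlapping sets $S_i\cup S_i'$ and $S_j\cup S_j'$. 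You need every antipodal pair $\{u^+,u^-\}$ to be confined to a single $\iota$-orbit of parts and then keep one part per orbit --- that is, you must run the localization in the quotient of the spectral sphere by the antipodal map, with the spreading and edge-charging estimates reproved for the quotient metric. This is precisely how the paper handles it: it introduces the mirror radial projection distance $d_M$ (the projective-space metric, $d_M(u,v)=\min\{\|F'(u)-F'(v)\|,\|F'(u)+F'(v)\|\}$), proves Lemma \ref{spreading} and Lemmas \ref{wooooo}, \ref{I think I solved it} for that metric, applies the random partition of \cite{CCGGP} in the quotient, and then within each part defines $S_i$ and $S_i'$ as the positive and negative sign classes of one projected coordinate, so disjointness is structural rather than argued by equivariance. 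Without this (or an equivalent, proved, equivariance argument) your construction does not deliver disjoint $S_1,S_1',\ldots,S_r,S_r'$.

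A second, smaller gap is the accounting. The factor $\frac{1}{k+1-i}$ does not come from a peeling argument in which the $i$-th vector is drawn from a $(k+1-i)$-dimensional subspace; the paper produces all $r$ candidate pairs from a single partition and a single random threshold, bounds the expected total of bad stubs, and then simply sorts the communities so that the $i$-th best inherits at most $\frac{1}{r+1-i}$ of the total (Theorem \ref{all the work}). Your peeling scheme would need later test vectors supported away from the earlier communities while staying in the top eigenspace, which you do not justify and which is not needed. Likewise the explicit constants are not a matter of ``collecting'': $2(8k+1)(4k-1)$ comes from the specific choices $\Delta=(2\sqrt k)^{-1}$, $C_1=4k$, $C_2=\frac{1}{2(k-1/4)}$ in (A), and in (B) the loss from $k$ to $k/2$ parts comes from the degraded spreading bound after the Gaussian dimension reduction (balls of mass at most $\frac{2\mathcal{M}(V)}{k}$, recombined into at least $k/2$ parts of mass at least $\frac{\mathcal{M}(V)}{k}$), not from pairing pieces with their $\iota$-images.
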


\newpage

To summarize the result asymptotically:

\begin{cor} \label{main cor}
There exists a constant $C$ such that for any graph $G$ and value of $k$ there exist disjoint sets $S_1, S_1', S_2, S_2', \ldots, S_r, S_r'$ such that for each $1 \leq i \leq r$,\\
(A) $r = k$ and $\tilde{\phi}_G(S_i, S_i') \leq  C\frac{k^2}{k+1-i} \sqrt{2 - \lambda_{n+1-k}}$. \\
(B) $r = k/4$ and $\tilde{\phi}_G(S_i, S_i') \leq  C\sqrt{\log(k) (2 - \lambda_{n+1-k})}$.
\end{cor}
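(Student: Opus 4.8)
The plan is to obtain Corollary~\ref{main cor} as a purely asymptotic reformulation of Theorem~\ref{main}: absorb the explicit constants into a single absolute constant $C$, replace the averaged spectral quantity $\frac1k\sum_{j=1}^k(2-\lambda_{n+1-j})$ by its largest term $2-\lambda_{n+1-k}$, and, for part~(B), discard the communities carrying the weakest guarantee so that the factor $\frac{k}{(k/2)+1-i}$ collapses to a constant.

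For part~(A) I would start from Theorem~\ref{main}(A), which already produces $r=k$ disjoint bipartite communities with $\tilde{\phi}_G(S_i,S_i')\le \frac{2(8k+1)(4k-1)}{k+1-i}\sqrt{\frac1k\sum_{j=1}^k(2-\lambda_{n+1-j})}$. Since $2(8k+1)(4k-1)\le 64k^2$, the combinatorial prefactor is at most $64k^2/(k+1-i)$. For the spectral factor, the eigenvalues of $L$ are nondecreasing, so $\lambda_{n+1-j}\ge\lambda_{n+1-k}$ for every $1\le j\le k$; hence each summand satisfies $2-\lambda_{n+1-j}\le 2-\lambda_{n+1-k}$ and the average does not exceed $2-\lambda_{n+1-k}$. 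Multiplying the two estimates yields~(A) with, e.g., $C=64$.

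For part~(B) I would invoke Theorem~\ref{main}(B) with the same $k$, obtaining $r\le k/2$ disjoint bipartite communities, and then keep only those with $1\le i\le\lceil k/4\rceil$ --- exactly the ones whose bound, being proportional to $\frac1{(k/2)+1-i}$, is smallest --- and relabel so that $r=\lceil k/4\rceil$. For each retained index, $(k/2)+1-i\ge (k/2)+1-\lceil k/4\rceil\ge k/4$, so $\frac{k}{(k/2)+1-i}\le 4$; this is precisely why $k/4$, rather than something closer to $k/2$, appears in the corollary. Next, $1280\sqrt{3\ln(200k^2)}+4=O(\sqrt{\log k})$ since $3\ln(200k^2)=6\ln k+3\ln 200\le C'\ln k$ once $k\ge 2$. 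Using the same eigenvalue-monotonicity bound $\frac1k\sum_{j=1}^k(2-\lambda_{n+1-j})\le 2-\lambda_{n+1-k}$ then collapses the right-hand side to $O(1)\cdot O(\sqrt{\log k})\cdot\sqrt{2-\lambda_{n+1-k}}=O\!\left(\sqrt{\log(k)\,(2-\lambda_{n+1-k})}\right)$, which is~(B).

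I do not expect a genuine obstacle here --- the corollary is bookkeeping --- but two points deserve a sentence of care. First, the constant hidden in $O(\sqrt{\log k})$ degenerates as $k\to 1$, so the statement should be read for $k$ above some absolute threshold, or with $\log k$ interpreted as $1+\log k$; I would say this explicitly. Second, to take $r=\lceil k/4\rceil$ literally in~(B) one needs the construction of Theorem~\ref{main}(B) to output at least $\lceil k/4\rceil$ communities; I would either point to the step of its proof that guarantees this count, or, should the construction ever return fewer, record~(B) with ``$r\le k/4$'', which still captures the intended asymptotics.
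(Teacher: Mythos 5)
Your proposal is correct and matches the paper's intent: the paper offers no separate argument for Corollary~\ref{main cor}, presenting it as an asymptotic summary of Theorem~\ref{main}, and your bookkeeping (bounding $2(8k+1)(4k-1)\le 64k^2$, using monotonicity of the eigenvalues to replace the averaged spectral term by $2-\lambda_{n+1-k}$, and in (B) retaining only the $\lceil k/4\rceil$ communities with smallest index so that $\frac{k}{(k/2)+1-i}\le 4$ while $1280\sqrt{3\ln(200k^2)}+4=O(\sqrt{\log k})$) is exactly the intended derivation. Your two caveats (small $k$ and the count of communities produced in (B)) are reasonable and consistent with the construction in the proof of Theorem~\ref{main}.B, which yields $k/2$ parts.
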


Liu \cite{L} proved that large unweighted cycles satisfy 
$$0.45 \sqrt{2 - \lambda_{n+1-k}} \leq \min_{S_1, S_2, \ldots, S_k, S_k'} \max_i \tilde{\phi}_{C_N}(S_i, S_i') \leq 1.7 \sqrt{2 - \lambda_{n+1-k}}.$$ 
There exist several examples that show that the $\sqrt{\log(k)}$ term is necessary for Theorem \ref{trevisan thm} (see \cite{LGT} and \cite{LRTV}).
We modify one of those examples to demonstrate the sharpness of Corollary \ref{main cor}.  We call this example the \emph{Bipartite Noisy Hypercube}.

\begin{ex}[Bipartite noisy hypercubes] \label{bipartite noisy hypercubes}
Let $k$ and $c$ be fixed, with $1 \leq c \leq \frac{10k}{22}$, and let $\epsilon = \frac{1}{\log_{2.2}(k/c)}$.
Let $G^{(o)}_{k,c}$ be the weighted complete bipartite graph on $2^k$ vertices, where $V = \{0,1\}^k$, an edge $xy$ exists if and only if $\|x-y\|_1$ is odd, and the weight of edge $xy$ is $\epsilon^{\|x - y\|_1}$.
In $G^{(o)}_{k,c}$ we have that $2 - \lambda_{n - k} \leq 3 \epsilon$ and for any set $T,T' \subset V$ with $|T \cup T'| \leq \frac{c}{k}|V|$ we have that $\tilde{\phi}(T,T') \geq 1/2$.
\end{ex}

The outline for the rest of the paper is as follows:  
in Section 2 we prove Theorem \ref{main}, 
followed by the details of Example \ref{bipartite noisy hypercubes} in Section 3,
and concluding with Section 4 where we present a heuristic algorithm and empirical results on its performance.

%%%%%%%%%%%%%%%%%%%%%%%%%%%%%%%%%%%%%%%%%%%%%%%%%%%%%%%%%%%%%%%%%%%%%%%%%%%%%%%%%%%%%%%%%%%%%%%%%%%%%%%%%%%%%%%%%%%%%%%%%%%%%%%
%%%%%%%%%%%%%%%%%%%% PROOF OF THEOREM \REF{MAIN} %%%%%%%%%%%%%%%%%%%%%%%%%%%%%%%%%%%%%%%%%%%%%%%%%%%%%%%%%%%%%%%%%%%%%%%%%%%%%%
%%%%%%%%%%%%%%%%%%%%%%%%%%%%%%%%%%%%%%%%%%%%%%%%%%%%%%%%%%%%%%%%%%%%%%%%%%%%%%%%%%%%%%%%%%%%%%%%%%%%%%%%%%%%%%%%%%%%%%%%%%%%%%%
\section{Proof of Theorem \ref{main}}
Louis, Raghavendra, Tetali, and Vempala \cite{LRTV} and Lee, Gharan, and Trevisan \cite{LGT} used different approaches to prove Theorem \ref{trevisan thm}.
Both groups considered the $k$ eigenvectors as a mapping into $\mathbb{R}^k$.
The former randomly projected the points in spectral space onto the axes, where each axis forms a candidate community to be calculated using the same procedure as Cheeger's Inequality.
The latter grouped points together in $\mathbb{R}^k$ using a random $\epsilon$-net followed by a test of magnitude for community membership.
Our approach is a hybrid of these arguments: we will partition the points in $\mathbb{R}^k$ randomly, and each part of the partition will be deterministically projected onto an axis where a community will be calculated using the same procedure as Theorem \ref{one dim algorithm}.

%%%%%%%%%%%%%%%%%%%%%%%%%%%%%%%%%%%%%%%%%%%%%%%%%%%%%%%%%%%%%%%%%%%%%%%%%%%%%%%%%%%%%%%%%%%%%%%%%%%%%%%%%%%%%%%%%%%%%%%%%%%%%%%
\subsection{Definitions and set-up}\label{sub def}
We will be examining the signless normalized Laplacian $\tilde{L} = I + D^{-1/2}AD^{-1/2}$ and the smallest eigenpairs of $\tilde{L}$.
Because $\tilde{L} = 2I - L$, the eigenvalues of $\tilde{L}$ are $\tilde{\lambda}_i = 2 - \lambda_{n+1-i}$, and the eigenvectors of $L$ are the eigenvectors of $\tilde{L}$ in reverse order.
%Working with $L$ was more natural for the MAX-CUT problem, when the goal was to maximize the good edges.
%This proof will focus on minimizing the bad stubs, and our formulas will be simpler and cleaner by working with the small eigenvalues of $\tilde{L}$.

Let $F:V \rightarrow \mathbb{R}^k$ be a map, and let $\|x-y\|$ be the standard Euclidean distance between points $x,y \in \mathbb{R}^k$.
We define the \emph{signless Rayleigh quotient of $F$} to be $\tilde{\mathcal{R}}_G(F) = \frac{\sum_{ij \in E^<} w_{ij}\|F(u_i) + F(u_j)\|^2}{\sum_{i \in V} \|F(u_i)\|^2 d(u_i)}.$
%We can relate this quotient to our eigenvalues in the familiar way.
If $f:V\rightarrow\mathbb{R}$ and $e = D^{1/2}f$, then 
%\begin{eqnarray*}
%\tilde{\mathcal{R}}_G(f) 	& = & 		\frac{\sum_{ij \in E^<} w_{ij}|f(u_i) + f(u_j)|^2}{\sum_{i \in V} |f(u_i)|^2 d(u_i)}  \\
%				& = & 		\frac{\sum_{ij \in E^<} w_{ij}\left(f(u_i)^2 + 2f(u_i)f(u_j) + f(u_j)^2\right)}{\sum_{i \in V} f(u_i)^2 d(u_i)}  \\
%				& = & 		\frac{\sum_i d(u_i) f(u_i)^2 + 2\sum_{ij \in E^<} w_{ij}f(u_i)f(u_j)}{\sum_{i \in V} f(u_i)^2 d(u_i)}  \\	
%				& = & 		\frac{\sum_i d(u_i) f(u_i)^2 + \sum_{ij \in E} w_{ij}f(u_i)f(u_j)}{\sum_{i \in V} f(u_i)^2 d(u_i)}  \\	
%				& = & 		\frac{f^T D f + f^T A f }{ f^T D f }  \\
%				& = & 		\frac{f^T D^{1/2}\tilde{L}D^{1/2} f}{f^T D f}  \\
%				& = & 		\frac{e^T \tilde{L} e}{e^T e}.	
%\end{eqnarray*}
$$\tilde{\mathcal{R}}_G(f) =  \frac{e^T \tilde{L} e}{e^T e}.$$
%So the smallest values of the signless Rayleigh quotient will be achieved by vectors in the span of the smallest eigenvectors of the signless normalized Laplacian.
Let $e_1, e_2, \ldots, e_k$ be the eigenvectors of $\tilde{L}$ that correspond to the smallest eigenvalues, and for each $i$, let $e_i = D^{1/2}f_i$.
Because $\tilde{L}$ is symmetric, we may choose our $e_i$ to be orthonormal.
It follows that $\tilde{\mathcal{R}}_G(f_i) = \tilde{\lambda}_i$.
It is also an easy calculation to see that $\sum_j d(u_j) f_i(u_j)^2 = 1$ for all $i$.
We choose $F(u) = (f_1(u), f_2(u), \ldots, f_k(u))$.

For each vertex $u$ with $F(u) \neq 0$, let $F'(u) = F(u)/\|F(u)\|$.
%For our purposes, it is more useful to consider the distance between points based on the angle between them than standard Euclidean distance.
For this type of operation we will modify the \emph{radial projection distance}, which is $d_F(x,y) = \left\| F'(x) - F'(y) \right\|$ when well defined and $d_F(x,y) =1$ when $F(x)$ or $F(y)$ is the origin.
This is the distance function used by Lee, Gharan, and Trevisan to cluster points in spectral space to find subsets of vertices with low conductance.
The radial projection distance can be thought of as an angle-based distance because if 
%$a = F'(x)$ and $b = F'(y)$ are unit vectors and $\theta$ is the angle between $a$ and $b$, 
$\theta$ is the angle between $F(x)$ and $F(y)$, then $d_F(x,y) = 2\sin(\theta/2)$ when $F(x), F(y)$ are not the origin.
%To see why this is true, let $L_1$ be the line containing the origin and $a$, let $L_2$ be the line containing $b$ that is orthogonal to $L_1$, let $z$ be the point where $L_1$ and $L_2$ intersect, let $\ell_3$ be the line segment from $z$ to $a$, let $\ell_4$ be the line segment from $b$ to $z$, and let $\ell_5$ be the line segment from $a$ to $b$.
%If $|\ell_i|$ is the length of line segment $\ell_i$, then it is simple trigonometry to see that $|\ell_3| = 1 - \cos(\theta)$, $|\ell_4| = \sin(\theta)$, and $|\ell_5| = d_F(x,y)$.
%Because $\ell_3$, $\ell_4$, and $\ell_5$ form a right triangle, we have that $|\ell_3|^2 + |\ell_4|^2 = |\ell_5|^2$.
%It is a direct calculation from here to show that $d_F(x,y) = 2\sin(\theta/2)$.

However, to find subsets of vertices that have low bipartite conductance, we wish to cluster a vertex $u$ with vertices that map to a point close to $-F(u)$ as well as close to $F(u)$.
For points $x$ and $y$, we define the \emph{mirror radial projection} to be $d_M(x,y) = d_F(x,y)$ when $x^Ty \geq 0$ and $d_M(x,y) = \left\| F'(x) + F'(y) \right\|$ otherwise.
This is equivalent to the distance function on the appropriate projective space.
Let $F_{uv}(u) = F(u)$ and $F_{uv}'(u) = F'(u)$ if $u^Tv > 0$, and $F_{uv}(u) = -F(u)$ and $F_{uv}'(u) = -F'(u)$ otherwise.
As a slight abuse of notation, for vertices $u,v$ we use the shorthand notation $d_M(F(u), F(v)) = d_M(u,v)$, which equals $\|F'(u) - F_{vu}'(v)\|$ when $F(u),F(v) \neq 0$.
If $\theta^*$ is the angle between $F(u)$ and $F_{uv}(v)$, then $d_M(u, v) = 2\sin(\theta^*/2)$.
For fixed vertex $u$ we have that $d_M(w,w') = \|F'_{wu}(w) - F_{w'u}'(w')\|$ behaves like standard Euclidean distance for all pairs of vertices $w,w'$ such that $d_M(u,w),d_M(u,w')\leq 2^{1/2}$.
When not specified, all distance functions are assumed to be $d_M$.
We define a ball for $u \in V$ to be $B_t(u) \subseteq V$ such that $B_t(u) = \{w \in V: d_M(u, w) < t\}$.

For a set of points $S$ and distance function $d$, we write the \emph{diameter} of $S$ as $\diam(S) = \sup_{x,y \in S}d(x, y)$.
%This is the \emph{diameter} of $S$ with respect to $d$.
For a set of vertices $T$, we define the \emph{volume} to be $\mathcal{V}(T) = \sum_{u \in T}d(u)$ and the \emph{mass} to be $\mathcal{M}(T) = \sum_{u \in T}d(u)\|F(u)\|^2 $.
We will use $P$ to denote a partition of the vertex set, and $P(u)$ to denote the part of the partition that contains vertex $u$.
We say that $F$ is $(\Delta, \eta)$-\emph{spreading} if for every subset of vertices $S$ with diameter less than $\Delta$ has mass at most $\eta \mathcal{M}(V)$.
The support of a map $Q:D\rightarrow \mathbb{R}^k$ is the subset of the domain $D' \subset D$ that is defined by $\|Q(x)\| \neq 0$ if and only if $x \in D'$.

Note that 
\begin{eqnarray*} 
\tilde{\mathcal{R}}_G(F) 	& = &  		\frac{\sum_{ab \in E^<} w_{ab}\|F(u_a) + F(u_b)\|^2}{\sum_{a \in V} d(u_a) \|F(u_a)\|^2 } \\
%	  			& = &  		\frac{\sum_{ab \in E^<} w_{ab} \sum_i |f_i(u_a) + f_i(u_b)|^2}{\sum_{a \in V} \sum_i |f_i(u_a)|^2 d(u_a)} \\
				& = & 		\frac{\sum_i \sum_{ab \in E^<} w_{ab} |f_i(u_a) + f_i(u_b)|^2}{\sum_i \sum_{a \in V} d(u_a)|f_i(u_a)|^2 } \\
				& = & 		\frac{\sum_i \left(\tilde{\mathcal{R}}_G(f_i) \sum_{a \in V} d(u_a)|f_i(u_a)|^2  \right)}{\sum_i \sum_{a \in V} d(u_a)|f_i(u_a)|^2} \\
%				& = &  		\frac{\sum_i \tilde{\mathcal{R}}_G(f_i)}{\sum_i 1} \\
				& = & 		\frac{\sum_i \tilde{\lambda}_i}{k}.
\end{eqnarray*}

%%%%%%%%%%%%%%%%%%%%%%%%%%%%%%%%%%%%%%%%%%%%%%%%%%%%%%%%%%%%%%%%%%%%%%%%%%%%%%%%%%%%%%%%%%%%%%%%%%%%%%%%%%%%%%%%%%%%%%%%%%%%%%%
\subsection{Finding a partition with tightly concentrated parts and balanced mass}\label{sub part}

\begin{lemma}\label{spreading}
If $2^{-1/2} \geq \Delta > 0$, then $F$ is $\left( \Delta, \frac{1}{k(1 - \Delta^2)}\right)$-spreading.
\end{lemma}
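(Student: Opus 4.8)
The plan is to adapt the standard ``spreading'' argument of Lee, Gharan, and Trevisan to the mirrored (projective) setting defined by $d_M$. Let $S$ be a vertex set with $\diam(S) < \Delta$. Since $\sum_u d(u) f_i(u)^2 = 1$ for each $i$, we have $\mathcal{M}(V) = \sum_i \sum_u d(u) f_i(u)^2 = k$, so it suffices to prove $\mathcal{M}(S) \le \frac{1}{1-\Delta^2}$. First I would dispose of degenerate cases: if $F(u)=0$ for some $u\in S$ and $S$ contains a second vertex $v$, then $d_M(u,v)=1>\Delta$ (as $\Delta\le 2^{-1/2}$), contradicting $\diam(S)<\Delta$; and if $S$ is empty or a single vertex at the origin then $\mathcal{M}(S)=0$. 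So I may assume $F(u)\neq 0$ for all $u\in S$, fix an arbitrary $u_0\in S$, and set $w=F'(u_0)=(w_1,\dots,w_k)$, a unit vector of $\mathbb{R}^k$.

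The core step is a one-sided angle estimate. For $u\in S$ write $\hat F(u)=F_{uu_0}(u)$ for the sign-aligned copy of $F(u)$, so that $\|\hat F(u)\|=\|F(u)\|$, $\langle \hat F(u),w\rangle\ge 0$, and $\hat F'(u)=F'_{uu_0}(u)$. By the definition of $d_M$, $\|w-\hat F'(u)\| = d_M(u_0,u)\le \diam(S)<\Delta$. Letting $\theta^*$ be the angle between $\hat F(u)$ and $w$, this says $2\sin(\theta^*/2)<\Delta$, hence $\sin^2\theta^* = 4\sin^2(\theta^*/2)\cos^2(\theta^*/2)\le 4\sin^2(\theta^*/2)<\Delta^2$, and therefore
$$\langle \hat F(u),w\rangle^2 = \|F(u)\|^2\cos^2\theta^* = \|F(u)\|^2\bigl(1-\sin^2\theta^*\bigr) > (1-\Delta^2)\,\|F(u)\|^2.$$
(Equivalently, this is Pythagoras for the decomposition of $\hat F(u)$ into its component along $w$ and its component orthogonal to $w$, together with the bound $\bigl\|\hat F(u)-\|F(u)\|\,w\bigr\| = \|F(u)\|\,d_M(u_0,u) < \|F(u)\|\,\Delta$ on the latter.)

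Finally I would feed this into the orthonormality of the spectral embedding. The scalar function $h(u):=\langle F(u),w\rangle = \sum_i w_i f_i(u)$ satisfies $D^{1/2}h = \sum_i w_i e_i$, whose squared norm is $\sum_i w_i^2 = 1$ because $\{e_i\}$ is orthonormal; hence $\sum_{u\in V} d(u)h(u)^2 = 1$. Since $\langle \hat F(u),w\rangle^2 = h(u)^2$ (the sign flip is harmless once squared), combining with the previous inequality yields
$$(1-\Delta^2)\,\mathcal{M}(S) = (1-\Delta^2)\sum_{u\in S} d(u)\|F(u)\|^2 < \sum_{u\in S} d(u)h(u)^2 \le \sum_{u\in V} d(u)h(u)^2 = 1,$$
so $\mathcal{M}(S) < \frac{1}{1-\Delta^2} = \frac{1}{k(1-\Delta^2)}\,\mathcal{M}(V)$, which is the claim. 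I do not anticipate a genuine obstacle; the only delicate point is the $\pm$ bookkeeping in $d_M$, which is exactly what squaring $\langle\hat F(u),w\rangle$ neutralizes, and the hypothesis $\Delta\le 2^{-1/2}$ is used only to keep $1-\Delta^2$ safely positive (in fact $\Delta<1$ would already suffice for this lemma).
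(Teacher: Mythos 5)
Your proof is correct and follows essentially the same route as the paper: the identity $\sum_{u\in V} d(u)\langle F(u),F'(u_0)\rangle^2=1$ from orthonormality of the $e_i$ (the paper writes this as expanding $1=\|F'(v)\|^2$ in the eigenvector basis), combined with the bound $\cos^2\theta^*\ge 1-4\sin^2(\theta^*/2)=1-d_M^2$ and $\mathcal{M}(V)=k$. The sign-alignment bookkeeping via $F_{uu_0}$ and the handling of origin points also match the paper's argument.
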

\begin{proof}
Let $S$ be a set of points with diameter at most $\Delta$ and $v \in S$.
If $S$ contains a point at the origin and has diameter less than $1$, then $|S| = 1$.
Furthermore, points at the origin have no mass, and thus the lemma is true trivially.
So we may restrict our attention to vertices that $F$ does not map to the origin.
Let $\theta_{vw}$ be the angle between vectors $F'(v)$ and $F(w)$, and let $\theta_{vw}^*$ be the angle between vectors $F'(v)$ and $F_{vw}(w)$.  Observe,
%Recall that  $d_M(v,w) = 2\sin(\theta_{vw}^*/2)$.
%Because $\cos(x) = - \cos(\pi - x)$ and $\theta_{vw}^* \in \{\theta_{vw}, \pi - \theta_{vw}\}$, we see that $\cos^2(\theta_{vw}) = \cos^2(\theta_{vw}^*)$.

\begin{eqnarray*}
1	& = &	\|F'(v)\|^2	\\
	& = &	\sum_{i\in\{1,\ldots,k\}} f'_i(v)^2 \cdot 1 + \sum_{i\neq j\in\{1,\ldots,k\}} f'_i(v)f'_j(v) \cdot 0 \\
	& = &	\sum_{i\in\{1,\ldots,k\}} f'_i(v)^2 \|e_i\|^2 + \sum_{i\neq j\in\{1,\ldots,k\}} f'_i(v)f'_j(v) (e_i^Te_j) \\
	& = & 	\sum_{i,j\in\{1,\ldots,k\}} (f'_i(v) e_i)^T (f'_j(v) e_j) \\
	& = & 	\left(\sum_{i\in\{1,\ldots,k\}} f'_i(v) e_i\right)^T 			\left(\sum_{j\in\{1,\ldots,k\}} f'_j(v) e_j\right)\\
	& = & 	\left\| \sum_{i\in\{1,\ldots,k\}} f'_i(v) e_i \right\|^2 \\
	& = &	\sum_{w \in V} \left(\sum_{i\in\{1,\ldots,k\}} f'_i(v) e_i(w)\right)^2  \\
	& = & 	\sum_{w \in V} d(w) \left(\sum_{i\in\{1,\ldots,k\}} f'_i(v) f_i(w)\right)^2 \\
	& = & 	\sum_{w \in V} d(w) (F'(v)^T F(w))^2 \\
%	& = & 	\sum_{w \in V} d(w) (\|F'(v)\| \|F(w)\| \cos(\theta_{vw}))^2 \\
	& = & 	\sum_{w \in V} d(w) \|F(w)\|^2 \cos^2(\theta_{vw}) \\
%	& = & 	\sum_{w \in V} d(w) \|F(w)\|^2 \cos^2(\theta_{vw}^*) \\
	& = & 	\sum_{w \in V} d(w) \|F(w)\|^2 (1 - \sin^2(\theta_{vw}^*)) \\
%	& \geq &\sum_{w \in S} d(w) \|F(w)\|^2 (1 - \sin^2(\theta_{vw}^*)) \\
	& \geq &\sum_{w \in S} d(w) \|F(w)\|^2 (1 - (2\sin(\theta_{vw}^*/2))^2) \\
	& = & \sum_{w \in S} d(w) \|F(w)\|^2 (1 - d_M(v,w)^2) \\
	& \geq & (1 - \Delta^2) \mathcal{M}(S).
\end{eqnarray*}

The statement of the lemma then follows by comparing this to 

\begin{eqnarray*}
 \mathcal{M}(V)				& = &   \sum_{w \in V} d(w) \| F(w)\|^2 \\
					& = &   \sum_{w \in V} d(w) \sum_{i \in \{1, \ldots, k\}} f_i(w)^2 \\
					& = &	\sum_{i \in \{1,\ldots,k\}} \sum_{w \in V} d(w) f_i(w)^2 \\
					& = &   \sum_{i \in \{1,\ldots,k\}} 1 \\
					& = & k .
\end{eqnarray*}
\end{proof}

We will partition our space by greedily assigning new points to a part; suppose that $V' \subset V$ have been assigned to some part. 
Pick a random point $x \in \mathbb{R}^k$, and create a new part equal to $(V - V') \cap \{u:F'(u) \in B_{\Delta/2}(x)\}$.
Repeat until $V' = V$.
Charikar, Chekuri, Goel, Guha, Plotkin \cite{CCGGP} proved that this simple algorithm performs reasonably well.
%They assume that the standard Euclidean distance function is used, but their proof works equally well in this setting when $\Delta < 0.1$.

\begin{lemma}[\cite{CCGGP}]\label{partition happy}
There exists a randomized algorithm to generate a partition $P$ such that each part of the partition has diameter at most $\Delta$ and 
$$ \mathbb{P}[P(u) \neq P(v)] \leq \frac{2 \sqrt{k} d_M(u,v)}{\Delta}.$$
\end{lemma}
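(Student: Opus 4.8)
The plan is to reduce the statement of Lemma~\ref{partition happy} to a known geometric fact about partitioning a metric space with bounded aspect ratio, namely the region-growing / ball-carving argument of Charikar, Chekuri, Goel, Guha, and Plotkin~\cite{CCGGP}. Concretely, I would first observe that the map $u \mapsto F'(u)$ sends every vertex (whose image is not the origin) to the unit sphere $S^{k-1} \subset \mathbb{R}^k$, together with the mirror identification $F'(u) \sim -F'(u)$, so the relevant metric is $d_M$ on a subset of projective space $\mathbb{RP}^{k-1}$. The key point is that $d_M$ is a metric of diameter at most $\sqrt{2}$ (since identifying antipodes caps the angle at $\pi/2$ and $d_M = 2\sin(\theta^*/2)$), and that for the range of scales we care about ($\Delta \le 2^{-1/2}$) it behaves locally like Euclidean distance on a ball in $\mathbb{R}^{k-1}$ — this is exactly the remark in Section~\ref{sub def} that for a fixed vertex $u$, $d_M(w,w')$ is genuine Euclidean distance whenever $d_M(u,w), d_M(u,w') \le 2^{1/2}$.

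Next I would describe the algorithm precisely and set up the probability bound. The algorithm picks a sequence of i.i.d.\ uniform random centers $x_1, x_2, \ldots$ in $\mathbb{R}^k$ (equivalently, it suffices to think of a random center drawn from $S^{k-1}$ together with a random radius, but the cleanest bookkeeping follows \cite{CCGGP} by taking centers in the ambient space), and at stage $j$ it forms the part consisting of all as-yet-unassigned vertices $u$ with $F'(u) \in B_{\Delta/2}(x_j)$ — here the ball is in the $d_M$ metric. By construction every part has $d_M$-diameter at most $\Delta$ (triangle inequality, twice the radius). For the separation bound, fix two vertices $u,v$. The event $P(u) \neq P(v)$ requires that some ball $B_{\Delta/2}(x_j)$ contains exactly one of $F'(u), F'(v)$ \emph{and} is the first ball to capture either of them; a union bound over which stage is the ``first hit'' reduces this to bounding, for a single uniformly random center $x$, the ratio
\[
\frac{\mathbb{P}\big[\,x \text{ separates } u \text{ from } v \text{ at radius } \Delta/2\,\big]}{\mathbb{P}\big[\,x \text{ captures } u \text{ or } v \text{ at radius } \Delta/2\,\big]}.
\]
The numerator is the measure of the symmetric difference $B_{\Delta/2}(F'(u)) \,\triangle\, B_{\Delta/2}(F'(v))$, which by the (Euclidean-like) local geometry is at most the measure of a spherical shell of thickness $d_M(u,v)$ around a sphere of radius $\Delta/2$; the denominator is at least the measure of one ball of radius $\Delta/2$. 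In $\mathbb{R}^k$ (or on $S^{k-1}$) the ratio of a thickness-$\delta$ shell at radius $\rho$ to a ball of radius $\rho$ is $O(k\,\delta/\rho)$, which gives $2\sqrt{k}\, d_M(u,v)/\Delta$ once the constants are tracked. Summing the ``first hit at stage $j$'' events telescopes and leaves exactly this one-center ratio, proving the claim.

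I would handle the degenerate cases separately and briefly: if $F(u)$ or $F(v)$ is the origin, then $d_M(u,v) = 1$ (by definition), so the asserted bound $2\sqrt{k}/\Delta \ge 1$ whenever $\Delta \le 2\sqrt{k}$, which holds since $\Delta \le 2^{-1/2}$; such vertices can be swept up in their own singleton parts or an arbitrary part without affecting diameters. The main obstacle — and the part I expect to require the most care rather than new ideas — is making the ``$d_M$ behaves Euclideanly'' reduction fully rigorous across the antipodal identification: one must check that the ball $B_{\Delta/2}(x)$ used by the algorithm never wraps around the projective identification in a way that breaks the shell/ball volume comparison, and that the choice of representative ($F'$ versus $-F'$, i.e.\ the $F_{vu}'$ sign convention from Section~\ref{sub def}) is consistent for all three points $u$, $v$, and the center $x$ simultaneously. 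Since the relevant radius is $\le \Delta/2 \le 2^{-3/2} < 2^{1/2}$, every point inside a single part lies within $d_M$-distance $2^{1/2}$ of that part's center, so the local-Euclidean regime applies and this technicality is resolvable; but it is where the write-up needs to be explicit. Everything else is the standard region-growing computation, which I would cite to \cite{CCGGP} rather than reproduce in full.
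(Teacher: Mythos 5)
The paper never proves Lemma \ref{partition happy}: it is imported directly from \cite{CCGGP}, so your overall plan --- describe the greedy ball-carving algorithm, reduce the separation probability to a one-center volume ratio, and cite the region-growing computation --- is exactly the paper's treatment, and your handling of the origin vertices and of the antipodal identification (noting that all scales involved are at most $2^{-1/2}$, where $d_M$ behaves Euclideanly as remarked in Section \ref{sub def}) is consistent with the paper's set-up.

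However, the one quantitative step you actually sketch does not deliver the stated bound, and it is precisely the step that produces the $\sqrt{k}$. Bounding the separating region by a shell of thickness $d_M(u,v)$ at radius $\Delta/2$ gives a shell-to-ball ratio of order $k\,d_M(u,v)/\Delta$, not $\sqrt{k}\,d_M(u,v)/\Delta$: the shell has volume roughly $d_M(u,v)\cdot k\,V_k(\Delta/2)^{k-1}$ against the ball's $V_k(\Delta/2)^{k}$, where $V_k$ is the volume of the unit ball. The $\sqrt{k}$ comes from the sharper estimate of the symmetric difference of the two balls around $F'(u)$ and $F'(v)$: each lune has volume roughly $d_M(u,v)\,V_{k-1}(\Delta/2)^{k-1}$ (thickness times the $(k-1)$-dimensional cross-section through the bisecting hyperplane), and $V_{k-1}/V_k \approx \sqrt{k/(2\pi)}$, which is what yields the constant $2\sqrt{k}/\Delta$ and is the actual content of the computation in \cite{CCGGP}. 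If you only cite \cite{CCGGP} this slip is harmless, but if you reproduce the argument as written your guarantee degrades to $2k\,d_M(u,v)/\Delta$, i.e.\ $C_1 = 4k^{3/2}$ rather than $4k$ in the proof of Theorem \ref{main}.A, costing a factor of $\sqrt{k}$ in the final bound. A smaller point: ``a uniformly random center in $\mathbb{R}^k$'' is not well defined; the centers must be drawn from a bounded region covering the image of $F'$ (e.g.\ the unit sphere, or a net of it), as in \cite{CCGGP} --- the paper's informal description shares this looseness, but a self-contained write-up should fix it.
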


Each of our communities will be a subset of a union of parts.
%It makes sense that each part of the partition should behave like a community: edges incident with a part should be mostly contained inside that part.
We will produce a lemma that shows that edges $uv$ with $P(u) \neq P(v)$ contribute very little to the term $\sum_{uv \in E^<} w_{uv}\|F(u) + F(v)\|^2$.

\begin{lemma} \label{wooooo}
For any $F:V \rightarrow \mathbb{R}^k$ and $u,v \in V$ such that $F(u),F(v) \neq 0$, we have that $d_M(u, v)\|F(u)\| \leq 2 \|F(u) + F(v)\|$.
\end{lemma}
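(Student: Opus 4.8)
The plan is to relate the mirror radial projection distance $d_M(u,v)$ to the raw distance $\|F(u)+F(v)\|$ by passing through the normalizations. Recall that when $F(u),F(v)\neq 0$ we have $d_M(u,v)=\|F'(u)-F'_{vu}(v)\|$, where $F'_{vu}(v)=F(v)/\|F(v)\|$ if $u^Tv>0$ and $F'_{vu}(v)=-F(v)/\|F(v)\|$ otherwise. In either case $d_M(u,v)$ measures the angle $\theta^*$ between $F(u)$ and $F_{uv}(v)$ via $d_M(u,v)=2\sin(\theta^*/2)$, and crucially $\theta^*\in[0,\pi/2]$ by the definition of the mirror operation (we always reflect to the hemisphere where the inner product is nonnegative). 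So the angle between the unit vectors $F'(u)$ and $F_{uv}'(v)$ is at most $\pi/2$.

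The key geometric step is: for two nonzero vectors $a=F(u)$ and $b=F_{uv}(v)$ whose angle $\theta^*$ lies in $[0,\pi/2]$, I want $d_M(u,v)\|a\| = 2\sin(\theta^*/2)\|a\| \leq 2\|a+b\|$. Here I would note that flipping $F(v)$ to $F_{uv}(v)$ only changes $F(u)+F(v)$ to either $F(u)+F_{uv}(v)$ or $F(u)-F_{uv}(v)=F(u)-F(v)$ up to sign of $b$; but since $\|F(u)+F(v)\|$ is what appears on the right and $\|F(u)-F(v)\| \geq \|F(u)+F_{uv}(v)\|$ precisely when the angle between $F(u)$ and $F(v)$ is obtuse — wait, I need to be careful. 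The cleanest route: $\|F(u)+F(v)\| \geq \|F(u)+F_{uv}(v)\|$ always, because if $u^Tv \geq 0$ then $F_{uv}(v)=F(v)$ and it is an equality, while if $u^Tv<0$ then $F_{uv}(v)=-F(v)$ and $\|F(u)+F(v)\|^2 - \|F(u)-F(v)\|^2 = 4F(u)^TF(v)$; but $F(u)^TF(v)$ has the same sign as $u^Tv$... so this would go the wrong way. Let me instead observe that the correct inequality to use is $\|F(u)+F(v)\| \geq \|\,\|F(u)\| - \|F(v)\|\,\|$ trivially is too weak; rather I should directly lower-bound $\|F(u)+F(v)\|$ by the component along a fixed direction.

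Here is the approach I would actually carry out. Set $a=F(u)$, $b=F(v)$, and let $b^\sharp = F_{uv}(v)$, so $b^\sharp = \pm b$ and the angle $\theta^*$ between $a$ and $b^\sharp$ satisfies $\cos\theta^* \geq 0$. First, $\|a+b\| \geq \|a+b^\sharp\|$ when $b^\sharp=b$ (trivial); when $b^\sharp=-b$ we have $a^Tb\le 0$, hence $\|a+b\|^2 = \|a\|^2+\|b\|^2+2a^Tb \geq \|a\|^2 + \|b\|^2 \geq \|a\|^2+\|b\|^2 - 2a^Tb^\sharp$... no. So I drop that and argue directly: decompose $a = a_\parallel + a_\perp$ and $b^\sharp = c_\parallel + c_\perp$ relative to the bisector direction $w$ of $a$ and $b^\sharp$. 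Then $a+b^\sharp$ has length $(\|a\|+\|b^\sharp\|)\cos(\theta^*/2) \geq \|a\|\cos(\theta^*/2)$. Meanwhile $d_M(u,v)\|a\| = 2\sin(\theta^*/2)\|a\|$. Since $\theta^*/2 \in [0,\pi/4]$, we have $\sin(\theta^*/2) \leq \cos(\theta^*/2)$, so $2\sin(\theta^*/2)\|a\| \leq 2\cos(\theta^*/2)\|a\| \leq 2\|a+b^\sharp\|$. The remaining point is $\|a+b^\sharp\| \leq \|a+b\|$: if $b^\sharp = b$ this is equality; if $b^\sharp=-b$, then $a^Tb\le0$ and $\|a+b\|^2 = \|a\|^2+\|b\|^2+2a^Tb$ while $\|a+b^\sharp\|^2 = \|a-b\|^2 = \|a\|^2+\|b\|^2-2a^Tb \geq \|a+b\|^2$ — that is the wrong direction again. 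So instead, when $b^\sharp=-b$ I should bound $d_M(u,v)\|a\|$ against $\|a+b\|=\|a-b^\sharp\|$ directly: the angle between $a$ and $-b^\sharp = b$ is $\pi-\theta^*\in[\pi/2,\pi]$, so $\|a-b^\sharp\| = \|a+b\| \geq (\|a\|+\|b\|)\sin(\theta^*/2) \geq \|a\|\sin(\theta^*/2)$, giving $d_M(u,v)\|a\| = 2\sin(\theta^*/2)\|a\| \leq 2\|a+b\|$ directly.

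So the write-up splits into the two cases $u^Tv\ge 0$ and $u^Tv<0$; in both, the chord-length formula $\|p+q\| = (\|p\|+\|q\|)\cos(\alpha/2)$ for the angle $\alpha$ between $p$ and $q$... actually $\|p+q\|^2 = \|p\|^2+\|q\|^2+2\|p\|\|q\|\cos\alpha$, and I only need $\|p+q\| \geq \|p\|\,|{\cos(\alpha/2)}|\cdot\text{something}$; the honest bound I will use is $\|p+q\|^2 \geq \|p\|^2(1+\cos\alpha) = 2\|p\|^2\cos^2(\alpha/2)$ obtained by dropping the $\|q\|^2 + 2\|p\|\|q\|\cos\alpha - \|p\|^2\cos\alpha\cdot\ldots$ — cleaner: $\|p+q\|^2 = \|p\|^2 + \|q\|^2 + 2\|p\|\|q\|\cos\alpha \geq \|p\|^2 + \|q\|^2 + 2\min(\|p\|,\|q\|)\max(\|p\|,\|q\|)... $ hmm. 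The genuinely clean fact is: for any $p,q$, $\|p+q\| \geq \|p\|\cdot|\cos\beta|$ where $\beta$ is the angle between $q$ and... no. Let me just use: $\|p+q\|^2 = \|p\|^2+\|q\|^2+2\|p\|\|q\|\cos\alpha$; treating this as a quadratic in $t=\|q\|\ge0$, its minimum over $t\ge 0$ is $\|p\|^2\sin^2\alpha$ if $\cos\alpha\le0$ and $\|p\|^2$ if $\cos\alpha\ge0$. In Case 1 ($b^\sharp=b$, so $\cos\theta^*\ge0$): $\|a+b\|^2 \geq \|a\|^2 \geq \|a\|^2\sin^2(\theta^*/2)\cdot 4 / 4$... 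I need the factor right: want $\|a+b\| \geq \sin(\theta^*/2)\|a\|$, and indeed $\|a+b\| \geq \|a\| \geq \sin(\theta^*/2)\|a\|$ since $\sin(\theta^*/2)\le 1$, hence $d_M\|a\| = 2\sin(\theta^*/2)\|a\| \leq 2\|a+b\|$. In Case 2 ($b^\sharp=-b$, so angle between $a$ and $b$ is $\pi-\theta^*$ with $\cos(\pi-\theta^*) = -\cos\theta^* \le 0$): $\|a+b\|^2 \geq \|a\|^2\sin^2(\pi-\theta^*) = \|a\|^2\sin^2\theta^* = 4\|a\|^2\sin^2(\theta^*/2)\cos^2(\theta^*/2) \geq 4\|a\|^2\sin^2(\theta^*/2)\cdot\frac12$ since $\cos^2(\theta^*/2)\ge\frac12$ for $\theta^*\le\pi/2$; so $\|a+b\| \geq \sqrt2\,\sin(\theta^*/2)\|a\| \geq \sin(\theta^*/2)\|a\|$, again giving $d_M\|a\| \leq 2\|a+b\|$.

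Thus the main obstacle is purely bookkeeping of the two cases and the angle between $a$ and $b$ versus $a$ and $b^\sharp$; there is no deep step, only the elementary quadratic-minimization bound $\|p+q\|^2 \geq \|p\|^2\min(1,\sin^2\alpha)$ and the observation $\cos^2(\theta^*/2)\ge 1/2$ on $[0,\pi/2]$. I now write this out.

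\begin{proof}
Since $F(u), F(v) \neq 0$, recall $d_M(u,v) = 2\sin(\theta^*/2)$ where $\theta^*$ is the angle between $F(u)$ and $F_{uv}(v)$, and by construction $\theta^* \in [0, \pi/2]$ because the mirror operation replaces $F(v)$ by $-F(v)$ exactly when $F(u)^TF(v) < 0$. In particular $\theta^*/2 \in [0,\pi/4]$, so $\sin(\theta^*/2) \leq 1$ and $\cos^2(\theta^*/2) \geq 1/2$.

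For arbitrary nonzero vectors $p, q \in \mathbb{R}^k$ at angle $\alpha$, write $t = \|q\| > 0$ and
$$\|p+q\|^2 = \|p\|^2 + t^2 + 2\|p\| t \cos\alpha,$$
a convex quadratic in $t$. If $\cos\alpha \geq 0$ it is increasing for $t \geq 0$, so $\|p+q\|^2 \geq \|p\|^2$; if $\cos\alpha \leq 0$ its minimum over $t \geq 0$ is at $t = -\|p\|\cos\alpha$ with value $\|p\|^2(1 - \cos^2\alpha) = \|p\|^2 \sin^2\alpha$. In all cases,
$$\|p+q\|^2 \geq \|p\|^2 \min(1, \sin^2\alpha).$$

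Apply this with $p = F(u)$ and $q = F(v)$. If $F(u)^TF(v) \geq 0$, then $F_{uv}(v) = F(v)$, the angle between $F(u)$ and $F(v)$ is $\theta^*$, and $\cos\theta^* \geq 0$, so
$$\|F(u) + F(v)\|^2 \geq \|F(u)\|^2 \geq \|F(u)\|^2 \sin^2(\theta^*/2),$$
hence $\|F(u)+F(v)\| \geq \sin(\theta^*/2)\|F(u)\| = \tfrac12 d_M(u,v)\|F(u)\|$. If instead $F(u)^TF(v) < 0$, then $F_{uv}(v) = -F(v)$, so the angle between $F(u)$ and $F(v)$ equals $\pi - \theta^*$, with $\cos(\pi-\theta^*) = -\cos\theta^* \leq 0$. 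Then
$$\|F(u)+F(v)\|^2 \geq \|F(u)\|^2 \sin^2(\pi - \theta^*) = 4\|F(u)\|^2 \sin^2(\theta^*/2)\cos^2(\theta^*/2) \geq 2 \|F(u)\|^2 \sin^2(\theta^*/2),$$
using $\cos^2(\theta^*/2) \geq 1/2$. Hence $\|F(u)+F(v)\| \geq \sqrt{2}\,\sin(\theta^*/2)\|F(u)\| \geq \tfrac12 d_M(u,v)\|F(u)\|$. In either case $d_M(u,v)\|F(u)\| \leq 2\|F(u)+F(v)\|$.
\end{proof}
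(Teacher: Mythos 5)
Your final write-up is correct, and it reaches the lemma by a genuinely different route than the paper. The paper never introduces angles: it sets $x = F(u)$, $y = F_{uv}(v)$ (the mirrored copy, so $x^Ty \ge 0$), writes $\|F(u)\|\,d_M(u,v) = \bigl\| x - \tfrac{\|x\|}{\|y\|}y \bigr\|$, and uses the triangle inequality together with the fact that the closest vector of norm $\|x\|$ to $y$ is $\tfrac{\|x\|}{\|y\|}y$ to get $\|F(u)\|\,d_M(u,v) \le 2\|x-y\|$; it then converts $\|x-y\|$ into $\|F(u)+F(v)\|$ by the same sign split you use ($\|x-y\| = \|F(u)+F(v)\|$ when $F(u)^TF(v)<0$, and $\|F(u)-F(v)\| \le \|F(u)+F(v)\|$ otherwise). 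You instead lower-bound $\|F(u)+F(v)\|$ directly, via the chord identity $d_M(u,v)=2\sin(\theta^*/2)$ with $\theta^*\in[0,\pi/2]$ and the elementary estimate $\|p+q\|^2 \ge \|p\|^2\min(1,\sin^2\alpha)$ from minimizing over $\|q\|$. Both arguments are elementary and hinge on the same case split; the paper's is coordinate-free and avoids trigonometry, while yours makes the geometry explicit and even yields a spare factor of $\sqrt{2}$ in the obtuse case. One remark on presentation: your exploratory preamble flirts with the false inequality $\|F(u)+F(v)\| \ge \|F(u)+F_{uv}(v)\|$ before discarding it — rightly so, since it fails exactly when the mirror flip occurs — and the final proof correctly avoids any reliance on it; trim those false starts before submitting.
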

\begin{proof}
For brevity, let $x = F(u)$ and $y = F_{uv}(v)$. 
Among all vectors $z$ with magnitude $\rho$, the one that minimizes $\|y-z\|$ is $z = \rho\frac{y}{\|y\|}$.
Using this, we see that 
\begin{eqnarray*}
 \|F(u)\|  d_M(u, v)    & = &    \|x\| \left\| \frac{x}{\|x\|} - \frac{y}{\|y\|} \right\| \\
 		& = & \left\| x - \frac{\|x\|}{\|y\|} y \right\|  \\
		& \leq & \left\| y - \frac{\|x\|}{\|y\|} y \right\| + \|x-y\|   \\
%		& \leq & \|y - x \| + \|x - y\| \\
		& \leq & 2 \|x - y\|.
\end{eqnarray*}

If $F(u)^T F(v) < 0$, then $\|x-y\| = \|F(u) + F(v)\|$, and the lemma follows.
If $F(u)^T F(v) \geq 0$, then $\|x-y\| = \|F(u) - F(v)\| \leq \|F(u) + F(v)\|$, and the lemma follows. 
\end{proof}

\begin{lemma} \label{I think I solved it}
For any $F:V \rightarrow \mathbb{R}^k$ and $u,v \in V$ such that $F(u),F(v) \neq 0$, we have that 
$$ \sum_{u \in V} \sum_{v \in N(u)}   w_{uv} d_M(u, v)\|F(u)\|^2 \leq  \sqrt{8\tilde{\mathcal{R}}(F)^{-1}}  \sum_{uv \in E^<} w_{uv}\|F(u) + F(v)\|^2 .$$
\end{lemma}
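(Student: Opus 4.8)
The plan is to reduce the statement to Lemma~\ref{wooooo} together with one application of the (weighted) Cauchy--Schwarz inequality. First I would rewrite each summand as $d_M(u,v)\|F(u)\|^2 = \left(d_M(u,v)\|F(u)\|\right)\|F(u)\|$ and bound the parenthesized factor using Lemma~\ref{wooooo}. The summands with $F(u) = 0$ vanish on both sides, and when $F(u) \neq 0$ but $F(v) = 0$ one has $d_M(u,v) = 1$, so the bound $d_M(u,v)\|F(u)\| \leq 2\|F(u)+F(v)\|$ still holds directly; hence nothing is lost by summing over every ordered pair $(u,v)$ with $v \in N(u)$. This gives
\[ \sum_{u \in V}\sum_{v \in N(u)} w_{uv}\, d_M(u,v)\|F(u)\|^2 \;\leq\; 2\sum_{u \in V}\sum_{v \in N(u)} w_{uv}\,\|F(u)+F(v)\|\,\|F(u)\|. \]

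Next I would apply Cauchy--Schwarz to the right-hand side, pairing $\sqrt{w_{uv}}\,\|F(u)+F(v)\|$ against $\sqrt{w_{uv}}\,\|F(u)\|$, obtaining
\[ \sum_{u \in V}\sum_{v \in N(u)} w_{uv}\,\|F(u)+F(v)\|\,\|F(u)\| \;\leq\; \left(\sum_{u \in V}\sum_{v \in N(u)} w_{uv}\|F(u)+F(v)\|^2\right)^{1/2}\left(\sum_{u \in V}\sum_{v \in N(u)} w_{uv}\|F(u)\|^2\right)^{1/2}. \]
I would then identify the two sums on the right: each edge is counted once from each endpoint, so $\sum_{u}\sum_{v \in N(u)} w_{uv}\|F(u)+F(v)\|^2 = 2\sum_{uv \in E^<} w_{uv}\|F(u)+F(v)\|^2$, while $\sum_{u}\sum_{v \in N(u)} w_{uv}\|F(u)\|^2 = \sum_{u \in V} d(u)\|F(u)\|^2$, which by the definition of the signless Rayleigh quotient equals $\tilde{\mathcal{R}}_G(F)^{-1}\sum_{uv \in E^<} w_{uv}\|F(u)+F(v)\|^2$.

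Abbreviating $E := \sum_{uv \in E^<} w_{uv}\|F(u)+F(v)\|^2$, the chain above bounds the left-hand side by $2\,(2E)^{1/2}\left(E/\tilde{\mathcal{R}}_G(F)\right)^{1/2} = \sqrt{8\,\tilde{\mathcal{R}}_G(F)^{-1}}\;E$, which is exactly the claimed inequality. I do not anticipate any real difficulty here: the only things to watch are the factor-of-two bookkeeping between sums over ordered pairs and sums over $E^<$, and the degenerate cases where $F$ sends an endpoint of an edge to the origin --- both handled in a sentence.
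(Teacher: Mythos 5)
Your proposal is correct and follows essentially the same route as the paper's own proof: apply Lemma \ref{wooooo} termwise, then Cauchy--Schwarz over ordered pairs, then identify $\sum_u \sum_{v \in N(u)} w_{uv}\|F(u)\|^2 = \sum_u d(u)\|F(u)\|^2$ and the doubled edge sum via the definition of $\tilde{\mathcal{R}}_G(F)$ to obtain the constant $\sqrt{8}$. Your extra sentence handling vertices mapped to the origin is a harmless (indeed slightly more careful) addition that the paper sidesteps via its hypothesis $F(u),F(v)\neq 0$.
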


\newpage

\begin{proof}
Apply the Cauchy-Schwartz formula to see that 
\begin{eqnarray*}
\sum_{u \in V} \sum_{v \in N(u)}  w_{uv} d_M(u, v)\|F(u)\|^2
			& \leq & \sum_{u \in V} \sum_{v \in N(u)} w_{uv}2\|F(u) + F(v)\|\|F(u)\| \\
			& \leq & 2 \sqrt{\sum_{u \in V} \sum_{v \in N(u)} w_{uv}\|F(u)\|^2}\cdot\\
			& & \sqrt{\sum_{u \in V} \sum_{v \in N(u)} w_{uv}\|F(u) + F(v)\|^2} \\
%			& = &	\sqrt{8}\sqrt{\sum_{u \in V} d(u) \|F(u)\|^2} \sqrt{\sum_{uv \in E^<} w_{uv}\|F(u) + F(v)\|^2} \\
			& = &	\sqrt{8\tilde{\mathcal{R}}(F)^{-1}}  \sum_{uv \in E^<} w_{uv}\|F(u) + F(v)\|^2 .
\end{eqnarray*}
\end{proof}

%%%%%%%%%%%%%%%%%%%%%%%%%%%%%%%%%%%%%%%%%%%%%%%%%%%%%%%%%%%%%%%%%%%%%%%%%%%%%%%%%%%%%%%%%%%%%%%%%%%%%%%%%%%%%%%%%%%%%%%%%%%%%%%
\subsection{The main result}

\begin{theorem} \label{all the work}
If we have a randomized method to generate a partition $P$ with $r$ parts such that $\mathbb{P}[P(u) \neq P(v)] \leq C_1 d_M(u,v)$ and each part has mass at least $C_2\mathcal{M}(V(G))$, 
then there exists vertex sets $S_1, \ldots, S_r, S_1', S_2', \ldots, S_r'$ where $\tilde{\phi}(S_i, S_i') \leq \frac{8C_1+4}{C_2(r - i + 1)} \sqrt{\tilde{\mathcal{R}}(F)}$. 
\end{theorem}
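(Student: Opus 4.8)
The plan is to run, in parallel, a ``peeling'' argument on the parts of the partition $P$. Since we have $r$ parts, each of mass at least $C_2 \mathcal{M}(V(G))$, and these are disjoint, we get $r C_2 \le 1$, so $C_2 \le 1/r$; we will want the final denominator $C_2(r-i+1)$ to reflect that as $i$ grows we have fewer parts left to work with but each still carries a guaranteed chunk of mass. For a fixed part $P_j$ we first project all vertices $u \in P_j$ onto a single axis: concretely, fix a reference vertex $v \in P_j$ and use the signed coordinate along $F'_{v}$, i.e. replace $F(u)$ by the scalar $g(u) = F'(v)^T F_{vu}(u)$ (or something equivalent that records the ``height'' of $u$ in the direction of $v$). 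Because $\diam(P_j) \le \Delta \le 2^{-1/2}$, all the $F_{vu}(u)$ point into the same half-space as $F(v)$ and $d_M$ behaves like Euclidean distance on $P_j$ (as noted in Section \ref{sub def}), so this one-dimensional projection is faithful: $|g(u) - g(w)|$ is comparable to $d_M(u,w)\cdot(\text{magnitudes})$, and $g(u)$ has the same sign pattern we want for a bipartite community — positive heights go into $S$, negative heights (from vertices nearer $-F(v)$) go into $S'$.

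Next I would apply Theorem \ref{one dim algorithm}-style sweeping (Trevisan's rounding) to this scalar $g$ restricted to $P_j$, but with the twist that the ``energy'' is measured against the global quantity $\sum_{uv \in E^<} w_{uv}\|F(u)+F(v)\|^2 = \tilde{\mathcal{R}}(F) \cdot \mathcal{M}(V)$ rather than just the energy inside $P_j$. The key is Lemma \ref{I think I solved it}, which bounds $\sum_u \sum_{v \in N(u)} w_{uv} d_M(u,v)\|F(u)\|^2$ by $\sqrt{8 \tilde{\mathcal{R}}(F)^{-1}}$ times the total edge energy; combined with Lemma \ref{partition happy}/the hypothesis $\mathbb{P}[P(u)\ne P(v)] \le C_1 d_M(u,v)$, the expected total mass of edges cut \emph{between} parts is at most $C_1$ times that quantity, hence at most $C_1 \sqrt{8 \tilde{\mathcal{R}}(F)^{-1}} \cdot \tilde{\mathcal{R}}(F)\mathcal{M}(V) = C_1\sqrt{8\tilde{\mathcal{R}}(F)}\,\mathcal{M}(V)$. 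So cross-part edges are cheap, and within each part the sweep cut is controlled by the in-part energy. Summing the sweep threshold inequality $\|\tilde B(S_j,S_j')\| \le \sqrt{2(2-\lambda)}\,\|C(S_j \cup S_j')\|$-type bound over parts, and charging each part's denominator (its volume, or rather mass) against $C_2\mathcal{M}(V)$, produces a bound of the shape $\tilde\phi(S_j,S_j') \le (\text{const}\cdot(C_1+1)/C_2)\sqrt{\tilde{\mathcal{R}}(F)}$ for the \emph{average} part; to get the $1/(r-i+1)$ improvement for the $i$-th best part, order the parts by quality and use a Markov/averaging argument: at least $r-i+1$ of the parts have bipartite conductance at most $\frac{r}{r-i+1}$ times the average, which folds the factor $r$ into $C_2$ (recall $C_2 \le 1/r$) to give exactly $\frac{8C_1+4}{C_2(r-i+1)}\sqrt{\tilde{\mathcal{R}}(F)}$.

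The main obstacle I anticipate is the bookkeeping that makes the one-dimensional projection inside a part lose only a constant factor while still being compatible with the \emph{global} normalization $\mathcal{M}(V)$ in the denominator of $\tilde{\mathcal{R}}(F)$: one has to show that $\|F(u)+F(v)\|$ for an edge inside a part dominates (up to constants) the analogous quantity $(g(u)+g(v))$ for the projected scalars — this is where Lemma \ref{wooooo} ($d_M(u,v)\|F(u)\| \le 2\|F(u)+F(v)\|$) does the heavy lifting, since inside a part $g(u)+g(v)$ is comparable to $-d_M(u,v)\|F(u)\|$ plus lower-order terms when $u,v$ sit near antipodal directions — and then to run the Trevisan sweep on $g$ with the correct weights. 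A secondary subtlety is handling vertices with $F(u)=0$ (mass zero), which can simply be discarded from all sets without affecting either numerator or denominator, and ensuring the randomized partition can be derandomized (or the expectation argument invoked) so that a \emph{fixed} choice of sets attaining all the bounds simultaneously exists — standard, via a union bound over the $r$ events or by taking the best outcome.
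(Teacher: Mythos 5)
Your high-level architecture matches the paper's (partition, then a one-dimensional projection inside each part, then a two-sided sweep, then order the parts to get the $\frac{1}{r-i+1}$ factor), but two of the steps you leave as ``bookkeeping'' are exactly where the proof lives, and as sketched they do not go through. First, your projection $g(u)=F'(v)^T F_{vu}(u)$ onto a reference direction inside the part relies on $\diam(P_j)\leq 2^{-1/2}$, which is \emph{not} a hypothesis of the theorem: the statement only assumes the separation probability $\mathbb{P}[P(u)\neq P(v)]\leq C_1 d_M(u,v)$ and the mass lower bound, so you are proving a weaker statement than the one claimed (and, as written, the sign-flip $F_{vu}$ aligns every vertex with $F(v)$, which destroys the positive/negative split you need for the sweep). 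The paper avoids any diameter assumption by projecting onto one of the $k$ eigen-coordinates $f_{j^{(i)}}$, chosen by an averaging argument so that the part's projected mass is exactly $\alpha_i$ times its true mass while the two energy terms (cross-part indicator term and $\|F(u)+F(v)\|^2$ term), weighted by $\alpha_i^{-1}$, are simultaneously controlled; this is precisely the resolution of the ``main obstacle'' you name but do not resolve, and it is also what makes the clean constant $8C_1+4$ come out (your $\cos$-losses from a reference-vertex projection would not).

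Second, the endgame is not a Markov bound on conductances. Both the numerator $\|\tilde{B}(S_{i,t},S_{i,t}')\|$ and the denominator $\|C(S_{i,t}\cup S_{i,t}')\|$ are random (they depend on the sweep threshold $t$ and on $P$); the mass hypothesis gives $\mathbb{E}_t\|C(S_{i,t}\cup S_{i,t}')\|\geq \hat{\alpha}^{-1}C_2\mathcal{M}(V)$, not a deterministic lower bound, so you cannot ``charge each part's denominator against $C_2\mathcal{M}(V)$'' and then average the ratios, and a union bound over the $r$ parts does not help since none of the events is high-probability. The paper orders the parts by the \emph{numerators} only, uses $\|\tilde{B}(S_{i,t},S_{i,t}')\|\leq\frac{1}{r+1-i}\sum_j\|\tilde{B}(S_{j,t},S_{j,t}')\|$, and then linearizes: it shows
$\mathbb{E}_{t,P}\left[\frac{\|C(S_{i,t}\cup S_{i,t}')\|}{C_2}\sqrt{\tilde{\mathcal{R}}(F)}-\frac{(r+1-i)\|\tilde{B}(S_{i,t},S_{i,t}')\|}{8C_1+4}\right]>0$,
so a threshold $t$ can be chosen separately for each $i$ with nonempty $C(S_{i,t}\cup S_{i,t}')$, which converts the expectation statement into the claimed per-part conductance bound. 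Your cross-part estimate via Lemma \ref{I think I solved it} is correct and matches the paper, and the intended two-sided sweep is the right idea (the paper's version needs the claim $\mathbb{E}_t|y_u+y_v|\leq\hat{\alpha}^{-1}\alpha_i^{-1}|f_j(u)+f_j(v)|(|f_j(u)|+|f_j(v)|)$ followed by Cauchy--Schwarz, not a black-box appeal to Theorem \ref{one dim algorithm}), but without the coordinate-selection step and the linearization step the proposal does not yet prove the stated bound.
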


\begin{proof}
Let $\chi$ denote an indicator variable.
Choose a partition $P$ that performs at least as well as the expectation in the sense that 
\begin{equation}\label{pick P}
 \sum_{u \in V}\sum_{v \in N(u)}w_{uv}\chi(P(u) \neq P(v))\|F(u)\|^2 \leq  \sum_{u \in V}\sum_{v \in N(u)}w_{uv}C_1 d_M(u,v)\|F(u)\|^2
\end{equation}

Fix some $i$; we will find the communities $S_i, S_i'$ independently.
We will project $F$ onto one of its coordinates $j^{(i)}$, and use $f_j$ instead of $F$.  
When there is no chance for confusion, we will use $j$ as shorthand for $j^{(i)}$.
If we choose a $j$ at random then the terms $f_j(u)^2$ and $(f_j(u) + f_j(v))^2$ have expectation $\|F(u)\|^2/k$ and $\|F(u) + F(v)\|^2/k$.
Choose a $j$ such that there exists an $\alpha_i$ where
$$ 0 \neq \sum_{u \in P_i} d(u) f_j(u)^2 = \alpha_i \sum_{u \in P_i} d(u) \|F(u)\|^2 $$
and 
\begin{equation}\label{pick j}
 \alpha_i^{-1} \sum_{u \in P_i} \sum_{v \in N(u)} w_{uv} \left(C_1\tilde{\mathcal{R}}(F)^{-1/2} (f_j(u) + f_j(v))^2 + \chi(P(u) \neq P(v))f_j(u)^2\right)
\end{equation}
$$  \leq  \sum_{u \in P_i} \sum_{v \in N(u)} w_{uv} \left(C_1\tilde{\mathcal{R}}(F)^{-1/2} \|F(u) + F(v)\|^2 + \chi(P(u) \neq P(v))\|F(u)\|^2\right)$$
Each $j$ may be chosen independently for each fixed $i$, but there is only one partition $P$, which was chosen before we fixed the value for $i$.
We will then use (\ref{pick j}) by summing across all values for $i$ at once, where the right hand side becomes (after using  Lemma \ref{I think I solved it} and (\ref{pick P}))
$$ C_1\tilde{\mathcal{R}}(F)^{-1/2} 2(1 + \sqrt{2}) \sum_{uv \in E^<} w_{uv} \|F(u) + F(v)\|^2.$$
The two terms in the left hand side of (\ref{pick P})) are positive so they are independently bounded by the right hand side.
The two independent bounds are 
\begin{equation} \label{bound f_j}
\sum_i \sum_{u \in P_i} \sum_{v \in N(u)} w_{uv} \chi(P(u) \neq P(v)) f_{j^{(i)}}(u)^2 \alpha _i^{-1} 
\end{equation}
$$ \leq 2C_1\tilde{\mathcal{R}}(F)^{-1/2} (1 + \sqrt{2}) \sum_{uv \in E^<} w_{uv} \|F(u) + F(v)\|^2$$
and
\begin{equation} \label{bound f_j + f_j}
\sum _i \sum_{u \in P_i} \sum_{v \in N(u)} (f_{j^{(i)}}(u) + f_{j^{(i)}}(v))^2\alpha_i^{-1} \leq 2(1 + \sqrt{2}) \sum_{uv \in E} w_{uv} \|F(u) + F(v)\|^2.
\end{equation}

Let $\hat{\alpha} = \max_i \alpha_i^{-1} f_{j^{(i)}}^2(u)$.
Choose $t \in (0,\hat{\alpha})$ uniformly and randomly and define two sets $S_{i,t} = \{u \in P_i : f_{j}(u) \geq \sqrt{t\alpha_i}\}$, $S_{i,t}' = \{u \in P_i : f_j(u) < - \sqrt{t\alpha_i}\}\}$.
Let $y_\ell = 1$ if $u_\ell \in S_{i,t}$, $y_\ell = -1$ if $u_\ell \in S_{i,t}'$, and $y_i = 0$ otherwise.
Note that $\sum_{v \in P_i}\sum_{u \in N(v)}w_{ij}|y_i + y_j| \geq \|\tilde{B}(S_t, S_t')\|$ and $\sum_{i\in V} |y_i|d(u_i) = \|C(S_t \cup S_t')\|$, so our theorem is equivalent to proving
$$\frac{\sum_{v \in P_i}\sum_{u \in N(v)}w_{ij}|y_i + y_j|}{\sum_{i \in V} |y_i|d(u_i)} \leq \frac{8C_1+4}{C_2(r - i + 1)} \sqrt{\tilde{\mathcal{R}}_G(F)}.$$

The expected volume of $S_{i,t} \cup S_{i,t}'$ is the mass of $P_i$: 
\begin{eqnarray*}
\mathbb{E}_t[\|C(S_{i,t} \cup S_{i,t}')\|] 			& = &	 \sum_{u \in P_i}d(u)\mathbb{P}[f_{j}(u)^2 \geq t\alpha _i] \\
						& = &    \sum _{u \in P_i}  d(u) f_{j}(u)^2 \alpha _i^{-1}\hat{\alpha}^{-1} \\
						& = & 	 \hat{\alpha}^{-1} \sum _{u \in P_i} \|F(u)\|^2 d(u) \\
						& \geq & \hat{\alpha}^{-1} C_2 \sum _{u \in V} \|F(u)\|^2 d(u).
\end{eqnarray*}

We claim that if $u,v \in P_i$, then 
$$ \mathbb{E}_{t} |y_i + y_j| \leq \hat{\alpha}^{-1} \alpha_i^{-1} |f_{j}(u) + f_{j}(v)|(|f_{j}(u)| + |f_{j}(v)|).$$
The proof of this splits into two cases: when $f_{j}(u)f_{j}(v) < 0$ and when $f_{j}(u)f_{j}(v) \geq 0$.
For the first case, assume that $f_{j}(u) < 0 < f_{j}(v)$.
We will only consider the case $|f_{j}(v)| \leq |f_{j}(u)|$; the other case follows similarly.
In this situation, the case becomes
\begin{eqnarray*}
\mathbb{E}_t |y_i + y_j|	& = &	|-1 + 1| \mathbb{P}\left(f_{j}(u) ^2, f_{j}(v) ^2 \geq t\alpha_i \right) + |0+0|\mathbb{P}\left(f_{j}(u) ^2, f_{j}(v) ^2 < t \alpha_i\right) \\
					& & \ \ + |-1+0|\mathbb{P}\left(f_{j}(v) ^2 < t \alpha_i \leq f_{j}(u) ^2\right) \\
%				& = & \left(0 + 0 + (f_{j}(u)) ^2 - (f_{j}(v)) ^2 \right) \alpha_i^{-1} \\
%				& = & \left(f_{j}(u) + f_{j}(v) \right) \left(f_{j}(u) - f_{j}(v) \right) \alpha _i^{-1}\\
%				& = & -\left(f_{j}(u) + f_{j}(v) \right) \left(f_{j}(u) + (-f_{j}(v)) \right)\alpha _i^{-1}\\
				& = & \left|f_{j}(u) + f_{j}(v)\right|\left(|f_{j}(u)| + |f_{j}(v)|\right)\alpha _i^{-1} \hat{\alpha}^{-1}.
\end{eqnarray*}
For the second case, we have that $f_{j}(u)f_{j}(v) \geq 0$.
By symmetry, assume that $0 \leq f_{j}(u) ^2 \leq f_{j}(v) ^2$.
In this situation, the case becomes
\begin{eqnarray*}
\mathbb{E}_t |y_i + y_j|	&=& 	0 \cdot \mathbb{P}\left(f_{j}(u) ^2\leq f_{j}(v) ^2 < t \alpha _i\right) +  1 \cdot \mathbb{P}\left(f_{j}(u) < t\alpha_i \leq f_{j}(v) ^2\right) \\
					& & \ \ + 2\cdot \mathbb{P}\left(t \alpha_i \leq f_{j}(u) ^2\leq f_{j}(v) ^2 \right) \\
%				& = & 	\left(0 + 1\left(f_{j}(v) ^2 - f_{j}(u) ^2\right) + 2\left( f_{j}(u) ^2 \right)\right) \alpha _i^{-1}  \\
%				& = & 	\left(f_{j}(v) ^2 + f_{j}(u) ^2 \right)\alpha _i^{-1} \\
%				&\leq& 	(f_{j}(v) + f_{j}(u)) ^2\alpha _i^{-1} \\
				& = & 	\left(|f_{j}(u)| + |f_{j}(v)|\right)\left|f_{j}(u) + f_{j}(v)\right| \alpha _i^{-1}\hat{\alpha}^{-1} .
\end{eqnarray*}
This concludes the proof to the claim.

We make use of this to count the bad stubs in all of our communities.

\newpage

\noindent Using (\ref{bound f_j}), 
\begin{eqnarray*}
\sum_i \mathbb{E}_t [\|\tilde{B}(S_{i,t}, S_{i,t}')\|]	& \leq &	\sum_i \sum_{u \in P_i} \sum_{v \in N(u)} w_{uv}\mathbb{P}[v \notin P_i, u \in S_t \cup S_t'] \\
					&   &	+ \sum_i \sum_{u \in P_i} \sum_{v \in N(u)} w_{uv}\mathbb{E}_t[|y_u + y_v| : u,v \in P_i] \\
					& \leq &\sum_i \sum_{u \in P_i} \sum_{v \in N(u)} w_{uv}\chi(P(u) \neq P(v)) f_j(u)^2 \alpha _i^{-1}\hat{\alpha}^{-1} \\
					&   &    +  \sum_i \sum_{u \in P_i} \sum_{v \in N(u)} w_{uv}\left(|f_{j}(u)| + |f_{j}(v)|\right)\left|f_{j}(u) + f_{j}(v)\right|\alpha _i^{-1}\hat{\alpha}^{-1}  \\
					& \leq & 2\hat{\alpha}^{-1}C_1(1 + \sqrt{2}) \tilde{\mathcal{R}}(F)^{-1/2} \sum_{ij \in E^<}  \|F(u) + F(v)\|^2 \\
					&   & + \hat{\alpha}^{-1} \sum_i \sum_{u \in P_i} \sum_{v \in N(u)} w_{uv}\left(|f_{j}(u)| + |f_{j}(v)|\right)\left|f_{j}(u) + f_{j}(v)\right|\alpha _i^{-1}.
\end{eqnarray*}
In order to bound the term 
$$Z = \sum_i \sum_{u \in P_i} \sum_{v \in N(u)} w_{uv}\left(|f_{j}(u)| + |f_{j}(v)|\right)\left|f_{j}(u) + f_{j}(v)\right|\alpha _i^{-1},$$
apply the Cauchy-Schwartz formula and (\ref{bound f_j + f_j}) as follows: 
\begin{eqnarray*}
\mathbb{E}_t [Z] 
					& \leq & \sqrt{ \sum_i \sum_{u \in P_i} \sum_{v \in N(u)} w_{uv}\alpha _i^{-1}  \left|f_{j^{(i)}}(u) + f_{j^{(i)}}(v)\right|^2} \\
					&     &   \cdot  \sqrt{ \sum_i \sum_{u \in P_i} \sum_{v \in N(u)} w_{uv}\alpha _i^{-1}  \left( |f_{j^{(i)}}(u)| + |f_{j^{(i)}}(v)|\right)^2} \\
					& \leq & \sqrt{ \sum_{uv \in E^<} 2(1 + \sqrt{2}) w_{uv}\|F(u) + F(v)\|^2} \sqrt{2\sum_{u \in V} d(u) \|F(u)\|^2} \\
					& = & 2\sqrt{\frac{(1 + \sqrt{2})}{\tilde{\mathcal{R}}(F)}}  \sum_{ij \in E^<}  \|F(u) + F(v)\|^2 .
\end{eqnarray*}
Plugging the bound on $Z$ into our previous bound yields
$$ \mathbb{E}_t \left[\sum_i \|\tilde{B}(S_{i,t}, S_{i,t}')\|\right] \leq 2\hat{\alpha}^{-1} \sqrt{\frac{(1 + \sqrt{2})}{\tilde{\mathcal{R}}(F)}} \left(C_1 \sqrt{1 + \sqrt{2}} + 1\right) \sum_{ij \in E^<}  \|F(u) + F(v)\|^2. $$

After $t$ is chosen, there will be some order such that $\|\tilde{B}(S_{i,t}, S_{i,t}')\| \leq \|\tilde{B}(S_{i',t}, S_{i',t}')\|$ when $i < i'$.
This ordering implies that 
$$\|\tilde{B}(S_{i,t}, S_{i,t}')\| \leq  \frac{1}{r+1-i} \sum_{i=1}^r \|\tilde{B}(S_{i,t}, S_{i,t}')\|. $$
Using $1 + \sqrt{2} < 3$ and $\sqrt{3} < 2$ we have that 
\begin{equation}\label{final ineq}
 \mathbb{E}_{t,P} \left[  \frac{\|C(S_{i,t} \cup S_{i,t}')\|}{C_2} \sqrt{\tilde{\mathcal{R}}(F)}  -   \frac{( r+1-i) \|\tilde{B}(S_{i,t}, S_{i,t}')\|}{8C_1 + 4} \right] > 0. 
\end{equation}
If $C(S_{i,t} \cup S_{i,t}') = \emptyset$, then $\tilde{B}(S_{i,t} \cup S_{i,t}') = \emptyset$ and the term inside \ref{final ineq} is zero.
So we may choose $t$ separately for each $i$ that performs at least as well as the expectation and satisfies $C(S_{i,t} \cup S_{i,t}') \neq \emptyset$.
\end{proof}

%%%%%%%%%%%%%%%%%%%%%%%%%%%%%%%%%%%%%%%%%%%%%%%%%%%%%%%%%%%%%%%%%%%%%%%%%%%%%%%%%%%%%%%%%%%%%%%%%%%%%%%%%%%%%%%%%%%%%%%%%%%%%%%
\subsection{Proof of Theorem \ref{main}.A}

Let $\Delta = (2\sqrt{k})^{-1}$. 
So each ball with diameter at most $\Delta$ contains at most $\frac{\mathcal{M}(V)}{k - 0.25}$ mass by Lemma \ref{spreading}.
Use Lemma \ref{partition happy} to partition $V$ into parts with with diameter at most $\Delta$, where for arbitrary edge $u,v$ we have that $ \mathbb{P}[P(u) \neq P(v)] \leq 4k d_M(u,v)$.
If two parts of the partition have mass less than $\frac{\mathcal{M}(V)}{2(k - 0.25)}$ each, then combine them (this process will maintain the property that each part has mass at most $\frac{\mathcal{M}(V)}{k - 0.25}$).

We claim that we now have at least $k$ parts with mass at least $\frac{\mathcal{M}(V)}{2(k - 0.25)}$.
If we have at most $k-1$ such parts in $P$, then the sum of the masses of those parts is at most $\frac{\mathcal{M}(V)(k-1)}{k - 0.25} = \mathcal{M}(V)\left(1 - \frac{3}{4(k - 0.25)}\right)$.
So either there are two parts left with mass at most $\frac{\mathcal{M}(V)}{2(k - 0.25)}$ that should have been combined, or there is one extra part with mass at least $\frac{\mathcal{M}(V)}{2(k - 0.25)}$.
This proves the claim.

The final step of the proof is to apply Theorem \ref{all the work} with $C_1 = 4k$, $r = k$, and $C_2 = \frac{1}{2(k - 0.25)}$.
\qed

%%%%%%%%%%%%%%%%%%%%%%%%%%%%%%%%%%%%%%%%%%%%%%%%%%%%%%%%%%%%%%%%%%%%%%%%%%%%%%%%%%%%%%%%%%%%%%%%%%%%%%%%%%%%%%%%%%%%%%%%%%%%%%%
\subsection{Proof of Theorem \ref{main}.B}

We follow the dimension reduction arguments of \cite{LGT}.
Let $h = 1200(2\ln(k) + \ln(200))$, and let $g_1, \ldots, g_h$ be random independent $k$-dimensional Gaussians, and define a projection $\Lambda:\mathbb{R}^k \rightarrow \mathbb{R}^h$ as $\Lambda(x) = h^{-1/2}(g_1^Tx, \ldots, g_h^Tx)$.
This mapping enjoys the properties (see \cite{LGT}) for any $x \in \mathbb{R}^k$ 
$$ \mathbb{E}[ \|\Lambda(x)\|^2 ] = \|x\|^2 $$
and 
$$ \mathbb{P}\left[ \|\Lambda(x)\| \notin [(1-\delta)\|x\|^2, (1+\delta)\|x\|^2] \right] \leq 2e^{-\delta^2h/12}. $$

Let $F^* = \Lambda \circ F$.
Recall Markov's inequality: if $X$ is a non-negative random variable, then $\mathbb{P}\left[\frac{X}{\mathbb{E}[X]} \geq a \right] \leq 1/a$.
This implies that with probability $0.9$ we have that 
$$\sum_{ij \in E^<}w_{ij}\|F^*(u_i) + F^*(u_j)\|^2 \leq 10\sum_{ij \in E^<}w_{ij}\|F(u_i) + F(u_j)\|^2.$$
Let $U_v = \{u \in V: \|F^*(u)\|^2 \in [0.9\|F(u)\|^2, 1.1\|F(u)\|^2]$.
We see that 
$$\mathbb{E}[ |V - U_v| ] \leq |V|2e^{-(0.1)^2h/12} = \frac{|V|}{100k^2}.$$
This implies that with probability $0.9$ we have that $\sum_{v \notin U_v} d(v) \|F(v)\|^2 \leq \frac{1}{10k^2}\sum_{v \notin V} d(v) \|F(v)\|^2$.
Therefore with the same $0.9$ probability we have that 
$$ \sum_{v \in V} d(v) \|F(v)^*\|^2 \geq \sum_{v \in U_v} d(v) \|F(v)^*\|^2 \geq \sum_{v \in U_v} d(v) 0.9 \|F(v)\|^2 \geq \sum_{v \in V} d(v) (0.9)^2 \|F(v)\|^2. $$
The probability of the intersection of two (possibly dependent) events, each with probability at least $0.9$, is at least $0.8$.
So with probability at least $0.8$ we have that 
$$\tilde{\mathcal{R}}_G(F^*)  \leq \frac{10}{(0.9)^2}\tilde{\mathcal{R}}_G(F).$$

We used $U_v$ to describe the set of vertices that ``behaved appropriately.''
We will now use $U_e$ to describe the set of pairs of vertices that ``behave appropriately.''
Let 
$$U_e = \{uv \in V^2: \|\Lambda \circ F'(u) - \Lambda \circ F_{vu}'(v)\| \in [0.9\|F'(u) - F_{vu}'(v)\|, 1.1\|F'(u) - F_{vu}'(v)\|]\}.$$
By definition, if $uv \in U_e$, then $d_M(F^*(u), F^*(v)) \in (1 \pm 0.1)d_M(u,v)$.  Observe,
\begin{eqnarray*}
\mathbb{E} \left[\sum_{uv \notin U_e} d(u)\|F(u)\|^2 d(v) \|F(v)\|^2 \right]
		& = & \sum_{uv \in V^2} d(u)\|F(u)\|^2 d(v) \|F(v)\|^2 \mathbb{P}[uv \notin U_e] \\
		& \leq & \sum_{uv \in V^2} d(u)\|F(u)\|^2 d(v) \|F(v)\|^2 2e^{-(0.1)^2h/12} \\
		& = & \left( \frac{\mathcal{M}(V)}{10k} \right)^2.
\end{eqnarray*}
We can then say that with $0.9$ probability we have 
$$\sum_{uv \notin U_e}d(u)d(v)\|F(u)\|^2\|F(v)\|^2 \leq 10\left( \frac{\mathcal{M}(V)}{10k} \right)^2 .$$
We claim that $F^*$ is spreading.
By way of contradiction, let $B$ be a ball in $\mathbb{R}^h$ with diameter at most $0.27$ and $\sum_{F^*(v) \in B} d(v) \|F(v)\|^2 > \frac{2\mathcal{M}(V)}{k}$.
Let $z$ be an arbitrary vertex such that $F^*(z) \in B$.
By the triangle inequality we have that $B_{0.27}(F^*(z)) \supset B$. 
Let $B' =  B_{0.3}(F(z))$, so that if $uz \in U_e$ and $F^*(u) \in B$, then $u \in B'$.
By Lemma \ref{spreading}, the mass of $B'$ is at most $\frac{\mathcal{M}(v)}{0.64k} \leq \frac{5\mathcal{M}(v)}{3k}$.
By our assumption, this implies that 
\begin{eqnarray*}
\sum_{vz \notin U_e} d(v) \|F(v)\|^2 
	& \geq & \sum_{F^*(v) \in B, F(v) \notin B'} d(v) \|F(v)\|^2 \\
	& \geq & \mathcal{M}(V)\left( \frac{2}{k} - \frac{5}{3k}\right)\\
	& = & \mathcal{M}(V)\frac{1}{3k}.
\end{eqnarray*}
We then sum this over all possible values of $z$ to get
\begin{eqnarray*}
10\left( \frac{\mathcal{M}(V)}{10k} \right)^2  & \geq & \sum_{uv \notin U_e}d(u)d(v)\|F(u)\|^2\|F(v)\|^2\\
		& \geq & \frac{1}{2} \sum_{F^*(z) \in B} d(z)\|F(z)\|^2 \sum_{vz \notin U_e} d(v) \|F(v)\|^2  \\
		& \geq & \frac{1}{2} \left(  \frac{2\mathcal{M}(v)}{k} \right) \left( \frac{\mathcal{M}(V)}{3k} \right).
\end{eqnarray*}
This is a contradiction, and therefore our claim is true.

The proof now easily follows from the proof to Theorem \ref{main}.A.
Project the points into $\mathbb{R}^h$, and then partition the points in the projected space using Lemma \ref{partition happy} and desired radius $\Delta = 0.27$.
We have probability $0.8$ that $\tilde{\mathcal{R}}_G(F^*)  \leq \frac{10}{(0.9)^2}\tilde{\mathcal{R}}_G(F)$ and probability $0.9$ that each ball of the projected space has mass at most $\frac{2\mathcal{M}(V)}{k}$, and so both of these things happen with probability at least $0.7$.
Similar to before, we may combine the parts of the partitions until we have at least $\frac{k}{2}$ parts, each with mass at least $\frac{\mathcal{M}(v)}{k}$.
The final step of the proof is to apply Theorem \ref{all the work} with $F^*$ instead of $F$, so that we may use $C_1 = \frac{2 \sqrt{1200\ln(200k^2)}}{0.27}$, $r = \frac{k}2$, and $C_2 = \frac{1}{k}$ to satisfy the assumptions of the theorem.
\qed

%%%%%%%%%%%%%%%%%%%%%%%%%%%%%%%%%%%%%%%%%%%%%%%%%%%%%%%%%%%%%%%%%%%%%%%%%%%%%%%%%%%%%%%%%%%%%%%%%%%%%%%%%%%%%%%%%%%%%%%%%%%%%%%
%%%%%%%%%%%%%%%%%%%% EXAMPLE %%%%%%%%%%%%%%%%%%%%%%%%%%%%%%%%%%%%%%%%%%%%%%%%%%%%%%%%%%%%%%%%%%%%%%%%%%%%%%%%%%%%%%%%%%%
%%%%%%%%%%%%%%%%%%%%%%%%%%%%%%%%%%%%%%%%%%%%%%%%%%%%%%%%%%%%%%%%%%%%%:%%%%%%%%%%%%%%%%%%%%%%%%%%%%%%%%%%%%%%%%%%%%%%%%%%%%%%%%%
\section{Noisy Bipartite Hypercube} \label{NBH}
In this section we will give the details behind the noisy bipartite hypercube, Example \ref{bipartite noisy hypercubes}.
Let $k$ and $c$ be fixed, with $1 \leq c \leq \frac{10k}{22}$, and let $\epsilon = \frac{1}{\log_{2.2}(k/c)}$.
Let $G_{k,c}$ be the weighted complete graph on $2^k$ vertices, where each vertex corresponds to a finite binary sequence of length $k$ (in other words $V = \{0,1\}^k$), and the weight of edge $xy$ is $\epsilon^{\|x - y\|_1}$.
$G_{k,c}$ is called the \emph{noisy hypercube}.
Lee, Gharan, and Trevisan \cite{LGT} demonstrated a separation between the eigenvalues of $G_{k,c}$ and the conductance of small sets in the graph.

We define $G^{(o)}_{k,c}$ to be a complete bipartite spanning subgraph of $G_{k,c}$ such that $xy \in E(G^{(o)})$ (and keeps the same weight) if and only if $\|x - y\|_1$ is odd.
We will show that $G^{(o)}_{k,c}$ satisfies $2 - \lambda_{n - k} \leq 3 \epsilon$ and for any set $T,T' \subset V$ with $|T \cup T'| \leq \frac{c}{k}|V|$ we have that $\tilde{\phi}(T,T') \geq 1/2$.  This will show that Corollary \ref{main cor} is sharp.

The norm between $x,y\in \{0,1\}^k$ in the above example is defined to be the number of entries in which $x$ and $y$ are different (denoted by $\|x-y\|_1$).
We will drop the subscripts $k$ and $c$ from $G_{k,c}$ when it is clear.
For vertex subsets $A, B \subseteq V$ define $E(A,B)$ to be the set of edges with one endpoint in $A$ and the other endpoint in $B$; edges contained inside $A \cap B$ are counted twice.
Let $W(A,B)$ be the sum of the weights on the edges in $E(A,B)$.

%%%%%%%%%%%%%%%%%%%%%%%%%%%%%%%%%%%%%%%%%%%%%%%%%%%%%%%%%%%%%%%%%%%%%%%%%%%%%%%%%%%%%%%%%%%%%%%%%%%%%%%%%%%%%%%%%%%%%%%%%%%%%%%
%%%%%%%%%%%%%%%%%%%%%%%%%%% BACKGROUND    %%%%%%%%%%%%%%%%%%%%%%%%%%%%%%%%%%%%%%%%%%%%%%%%%%%%%%%%%%%%%%%%%%%%%%%%%%%%%%%%%%%%%
%%%%%%%%%%%%%%%%%%%%%%%%%%%%%%%%%%%%%%%%%%%%%%%%%%%%%%%%%%%%%%%%%%%%%%%%%%%%%%%%%%%%%%%%%%%%%%%%%%%%%%%%%%%%%%%%%%%%%%%%%%%%%%%
\subsection{Background}
We can consider a vector $f$ as a map $f:V \rightarrow \mathbb{R}$, where $f(i)$ is the value in coordinate $i$ of the vector.
In this notation, we can think of the matrices $A$ and $L$ as operators on real-valued functions whose domain is $V$.
This notation - of maps and operators - also holds when we think of $V$ in terms of $\{0,1\}^k$ instead of $\{1,2,\ldots,n\}$.
For example, the \emph{adjacency matrix operator} is $A f(x)=\sum_{xy \in E} w_{xy}f(y)$.  
Let $\mathcal{H}_k$ denote the set of functions defined from $\{0,1\}^k$ into $\mathbb{R}$ with the inner-product of two functions $f,g\in\mathcal{H}_k$ defined by
$$\langle f,g\rangle =\frac{1}{n}\sum_{x\in V} f(x)g(x).$$
The \emph{$p$-norm} of a function $f$ is $\|f\|_p = \left(\frac{1}{n}\sum_{x\in V} |f(x)|^p\right)^{1/p}$, therefore $\|f\|_2 = \sqrt{\langle f,f \rangle}$.

Our proofs will make use of the rich field of study on maps whose domain is $\{0,1\}^k$.
Our notation follows that of \cite{W}.
We also found the course notes \cite{O notes} that O'Donnel grew into a book \cite{O book} to be enlightening.
We will need a select few theorems from this field, which we present below.

The \emph{Walsh functions} defined by 
$$W_S(x)=(-1)^{\sum_{i\in S} x_i}$$ for $S\subseteq [k]$ form an orthonormal basis for $\mathcal{H}_k$.
Thus, any $f\in\mathcal{H}_k$ can be written as $f(x)=\sum_{S\subseteq [k]} \widehat{f}(S)W_S(x)$ for some set of coefficients $\widehat{f}(S)$.
We call $\widehat{f}(S)$ the \emph{Fourier coefficients} of $f$ where
$$\widehat{f}(S) = \langle W_S, f \rangle = 2^{-k}\sum_{x\in V} f(x)(-1)^{\sum_{i\in S} x_i}.$$
Also recall Parseval's Identity which states that $\|f\|_2 ^2 = \sum_{S\subseteq [k]}\widehat{f}(S)^2$.

We define a \emph{noise process} $E_{\eta}$ to be a randomized automorphism on $\{0,1\}^k$, where $\mathbb{P}\left[E_{\eta}(x) = y\right] = \eta^{\|x-y\|_1}(1-\eta)^{k - \|x-y\|_1}$.
This is the standard model for independent bit-flip errors in coding theory.
The \emph{noise operator} is defined to be $N_{\eta} f(x) = \mathbb{E}\left[f(E_{\eta}(x))\right]$.
The noise operator is suggested to ``flatten out'' the values of $f$, although the exact strength to which this is true remains open \cite{BBBOW}. 
This process is intimately linked to Fourier coefficients and Walsh functions by $N_{\eta} f(x) = \sum_{S\subseteq [k]} \widehat{f}(S)\eta^{|S|}W_S(x)$ (this is also known as the Bonami-Beckner operator).
The final statement that we need is the Bonami-Beckner inequality: if $1 \leq p \leq q$ and $0 \leq \eta \leq \sqrt{(p-1)/(q-1)}$, then $\| N_{\eta} f \|_q \leq \|f\|_p$.
We will not need the full generality of this statement, just that if $0 \leq \eta \leq 1$, then 
\begin{equation}\label{Bonami-Beckner}
\sum_{S\subseteq [k]} \widehat{f}(S)^2 \eta^{2|S|}  \leq \|f\|_{1+\eta^2}^2.
\end{equation}

%%%%%%%%%%%%%%%%%%%%%%%%%%%%%%%%%%%%%%%%%%%%%%%%%%%%%%%%%%%%%%%%%%%%%%%%%%%%%%%%%%%%%%%%%%%%%%%%%%%%%%%%%%%%%%%%%%%%%%%%%%%%%%%
%%%%%%%%%%%%%%%%%%%%%%%%%%% PROOF         %%%%%%%%%%%%%%%%%%%%%%%%%%%%%%%%%%%%%%%%%%%%%%%%%%%%%%%%%%%%%%%%%%%%%%%%%%%%%%%%%%%%%
%%%%%%%%%%%%%%%%%%%%%%%%%%%%%%%%%%%%%%%%%%%%%%%%%%%%%%%%%%%%%%%%%%%%%%%%%%%%%%%%%%%%%%%%%%%%%%%%%%%%%%%%%%%%%%%%%%%%%%%%%%%%%%%
\subsection{Eigenvalues}

We begin by calculating the degree of a vertex in $G_{k,c}$.
Let $x$ be a fixed vertex, so that the degree of $x$ is 
$$\sum_{y \in V}\epsilon^{\|x - y\|_1} = \sum_{i=0}^k \epsilon^k \left|\{y : \|x - y\|_1 = i\}\right| = \sum_{i=0}^k {k \choose i}\epsilon^k = (1+\epsilon)^k.$$
Using this generating function, we see that the degree $d^o_k(x)$ of $x$ in $G^{(o)}_{k,c}$ is 
\begin{eqnarray*}
\frac{1}2 \left( (1+\epsilon)^k - (1-\epsilon)^k\right)	
		& = &	\frac{1}2 \left(\sum_{i=0}^k {k \choose i}\epsilon^i - \sum_{i=0}^k {k \choose i}(-\epsilon)^i \right) \\
		& = &   \frac{1}2 \sum_{i=0}^k {k \choose i} \epsilon^i \left(1 - (-1)^i \right) \\
		& = &   \sum_{0 \leq i \leq k, i\ \odd} {k \choose i} \epsilon^i \\
		& = & d^o_k(x).
\end{eqnarray*}

It will be convenient to define a graph $G^{(e)}$ to be the subgraph of $G_{k,c}$ where $E(G^{(e)}) = E(G_{k,c}) - E(G^{(o)}_{k,c})$.
Using a symmetrical argument, we see that each vertex in $G^{(e)}$ has degree $d^e_k(x) = \frac{1}2\left((1+\epsilon)^k + (1-\epsilon)^k\right)$.
We have chosen our $\epsilon$, $c$, and $k$ such that $d^o_k(x) \geq \frac{1}{2.2}(1 + \epsilon)^k$.

We will use the eigenvalues of the adjacency matrix to calculate the eigenvalues of the Laplacian of our graph.  Because our graph is regular, the eigenvectors of the Laplacian are the same as the eigenvectors of the adjacency matrix.  
We can use this information to directly calculate the eigenvalues associated to $G^{(o)}$. For this calculation we will need the eigenvalues of the normalized adjacency matrix.  The \emph{normalized adjacency matrix} is defined by $D^{-1/2} A D^{-1/2}$.  If $\rho$ is an eigenvalue of the adjacency matrix for a regular graph, $\overline{\rho}$ will denote the associated eigenvalue of the normalized version.

Let $\rho_S=\frac{(1+\epsilon)^{k-|S|}(1-\epsilon)^{|S|} - (1-\epsilon)^{k-|S|}(1+\epsilon)^{|S|}}{2}$.  The first lemma states that $W_S$ is an eigenfunction of the operator $A$ with eigenvalue $\rho_S$.

\begin{lemma}\label{eigenvalue}
Let $S\subseteq [k]$.  Then, $AW_S=\rho_S W_S$.
\end{lemma}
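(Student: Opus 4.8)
The plan is to compute $AW_S(x)$ directly from the definition of the adjacency operator and exploit the fact that $W_S$ is multiplicative with respect to the XOR group structure on $\{0,1\}^k$. Writing $AW_S(x) = \sum_{y:\ \|x-y\|_1\ \odd} \epsilon^{\|x-y\|_1} W_S(y)$, I would substitute $z = x \oplus y$ (coordinate-wise XOR), noting that $\|x-y\|_1 = |z| := \sum_i z_i$ and that as $y$ ranges over the vertices at odd Hamming distance from $x$, the string $z$ ranges over all strings of odd weight. The key algebraic fact is $W_S(y) = W_S(x \oplus z) = W_S(x)W_S(z)$, since $(-1)^{x_i + z_i} = (-1)^{x_i \oplus z_i}$ for bits. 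Substituting and pulling $W_S(x)$ out of the sum gives $AW_S(x) = W_S(x)\sum_{z:\ |z|\ \odd} \epsilon^{|z|}W_S(z)$, so $W_S$ is already exhibited as an eigenfunction, and it remains only to identify the scalar $\sum_{z:\ |z|\ \odd}\epsilon^{|z|}W_S(z)$ with $\rho_S$.

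For that scalar, I would split each $z$ into its restriction to $S$ (of some weight $a$, $0 \le a \le |S|$) and its restriction to $[k]\setminus S$ (of some weight $b$, $0 \le b \le k-|S|$), so that $|z| = a+b$, $W_S(z) = (-1)^a$, and the number of such $z$ is $\binom{|S|}{a}\binom{k-|S|}{b}$. The scalar becomes $\sum_{a+b\ \odd} \binom{|S|}{a}(-\epsilon)^a\binom{k-|S|}{b}\epsilon^b$, which I would evaluate via the generating polynomial $(1-\epsilon t)^{|S|}(1+\epsilon t)^{k-|S|} = \sum_{a,b}\binom{|S|}{a}(-\epsilon)^a\binom{k-|S|}{b}\epsilon^b t^{a+b}$: taking $\tfrac{1}{2}$ of the value at $t=1$ minus the value at $t=-1$ isolates the odd-degree terms and yields $\frac{1}{2}\left[(1-\epsilon)^{|S|}(1+\epsilon)^{k-|S|} - (1+\epsilon)^{|S|}(1-\epsilon)^{k-|S|}\right] = \rho_S$, as claimed.

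There is no genuine obstacle here; the argument is a short computation once the change of variables $z = x \oplus y$ is set up. The only points requiring care are keeping the odd-weight restriction consistent through that substitution and correctly separating the contributions of coordinates inside versus outside $S$, so that the signs come out as $(-\epsilon)^a$ against $\epsilon^b$. An equivalent, slightly more structural phrasing would be to write $A = \sum_{z:\ |z|\ \odd}\epsilon^{|z|}T_z$ where $T_z$ denotes translation by $z$, observe that each $T_z$ has $W_S$ as an eigenfunction with eigenvalue $W_S(z)$, and then sum the eigenvalues; but the direct computation above is equally transparent and is what I would write.
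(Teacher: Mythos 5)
Your proof is correct and follows essentially the same route as the paper's: both compute $AW_S(x)$ directly, use the sign structure of $W_S$ (you via the character identity $W_S(x\oplus z)=W_S(x)W_S(z)$, the paper via a parity case split that pulls out $W_S(x)$) and then evaluate the remaining scalar by separating the coordinates in $S$ from those in $\overline{S}$. The only cosmetic difference is that you extract the odd-weight terms from the generating polynomial $(1-\epsilon t)^{|S|}(1+\epsilon t)^{k-|S|}$ via $\tfrac{1}{2}\bigl(p(1)-p(-1)\bigr)$, whereas the paper identifies the two sub-sums with the even/odd degree quantities $d^e_{|S|}d^o_{|\overline{S}|}-d^e_{|\overline{S}|}d^o_{|S|}$; both evaluate to $\rho_S$.
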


Using Lemma \ref{eigenvalue} we see that for each $0 \leq i \leq k$, with multiplicity ${k \choose i}$, the normalized Laplacian has eigenvalue 
\begin{equation}\label{adj eig}
\lambda_i = 1 - \overline{\rho_i} = 1 - \frac{(1+\epsilon)^{k-i}(1-\epsilon)^{i} - (1-\epsilon)^{k-i}(1+\epsilon)^{i}}{(1+\epsilon)^k - (1-\epsilon)^k}.
\end{equation}
By choosing the $k+1$ sets $S$ with $|S| \geq k-1$, we have that our eigenvalues satisfy $2 - \lambda_{n - k} \leq 2 \epsilon + \frac{(1-\epsilon)^{k-1}(1+\epsilon)}{(1+\epsilon)^k - (1-\epsilon)^k}$.
We used Mathematica to confirm that $\frac{(1-\epsilon)^{k-1}(1+\epsilon)}{(1+\epsilon)^k - (1-\epsilon)^k} \leq \epsilon$ for the ranges of $c,k$ allowed.

Now we return to give the proof of Lemma \ref{eigenvalue}.

\noindent\textit{Proof of Lemma \ref{eigenvalue}.}  Let $S\subseteq [k]$.  Consider the following:

\begin{small}\begin{eqnarray*}
A W_S(x) &=& \sum_{y\in N(x)}w_{xy} W_S(y)\\
	&=& \sum_{\substack{\left\|x-y\right\|_1\; \text{odd} \\  \sum_{i\in S}x_i = \sum_{i\in S} y_i (\text{mod}\;2)}}  \epsilon^{\left\| x-y\right\|_1} (-1)^{\sum_{i\in S}y_i} \\
	&+& \sum_{\substack{\left\|x-y\right\|_1\; \text{odd} \\  \sum_{i\in S}x_i \neq \sum_{i\in S} y_i (\text{mod}\;2)}}  \epsilon^{\left\| x-y\right\|_1} (-1)^{\sum_{i\in S}y_i}\\
	&=& W_S(x) \left( \sum_{\substack{\sum_{i\in S}x_i = \sum_{i\in S} y_i (\text{mod}\;2) \\ \sum_{i\in \overline{S}} x_i \neq \sum_{i\in \overline{S}} y_i (\text{mod}\; 2)}}  \epsilon^{\left\| x-y\right\|_1} -
 \sum_{\substack{\sum_{i\in S}x_i \neq \sum_{i\in S} y_i (\text{mod}\;2) \\ \sum_{i\in \overline{S}} x_i = \sum_{i\in \overline{S}} y_i (\text{mod}\; 2)}}  \epsilon^{\left\| x-y\right\|_1}\right).\\
\end{eqnarray*}\end{small}

First we will concentrate on the first summand in the above expression, call it $T_1$.  In $T_1$ we are summing over all $y\in N(x)$ such that the following two conditions hold:
\begin{enumerate}
\item $\sum_{i\in S}x_i = \sum_{i\in S} y_i (\text{mod}\;2)$ \\
\item $\sum_{i\in \overline{S}} x_i \neq \sum_{i\in \overline{S}} y_i (\text{mod}\; 2)$.
\end{enumerate}
Notice that $\left\| x-y\right\|_1 = \sum_{i\in S} |x_i-y_i| + \sum_{i\in \overline{S}} |x_i-y_i|$.  
Thus,
$$T_1=\left(\sum_{\substack{ \sum_{i\in S}x_i=\sum_{i\in S} y_i\; (\text{mod}\;2) \\ y_i\;\text{where}\; i\in S}} \epsilon^{\sum_{i\in S} |x_i-y_i| }\right)\left(\sum_{\substack{ \sum_{i\in \overline{S}}x_i\neq\sum_{i\in \overline{S}} y_i\; (\text{mod}\;2) \\ y_i\;\text{where}\; i\in \overline{S}}} \epsilon^{\sum_{i\in \overline{S}} |x_i-y_i| }\right).$$
In a similar fashion, the second summand, call it $T_2$, can be seen to be
$$\left(\sum_{\substack{ \sum_{i\in \overline{S}}x_i=\sum_{i\in \overline{S}} y_i\; (\text{mod}\;2) \\ y_i\;\text{where}\; i\in \overline{S}}} \epsilon^{\sum_{i\in \overline{S}} |x_i-y_i| }\right)\left(\sum_{\substack{ \sum_{i\in S}x_i\neq\sum_{i\in S} y_i\; (\text{mod}\;2) \\ y_i\;\text{where}\; i\in S}} \epsilon^{\sum_{i\in S} |x_i-y_i| }\right).$$

Now, notice that $T_1=d_{|S|}^e\cdot d_{|\overline{S}|}^o$ and $T_2=d_{|\overline{S}|}^e\cdot d_{|S|}^o$.  Pulling everything back together we observe

$$A W_S(x)= \left(d_{|S|}^e\cdot d_{|\overline{S}|}^o - d_{|\overline{S}|}^e\cdot d_{|S|}^o \right) W_S(x) = \rho_S W_S(x). \hfill\qed$$  

Our proof about small sets having large conductance will make use of the fact that 
\begin{eqnarray*}
\overline{\rho_{|S|}} &=& \frac{(1+\epsilon)^{k-|S|}(1-\epsilon)^{|S|} - (1-\epsilon)^{k-|S|}(1+\epsilon)^{|S|}}{(1+\epsilon)^k - (1-\epsilon)^k}\\
 &\leq& \frac{11}{10}\left(\frac{(1+\epsilon)^{k-|S|}(1-\epsilon)^{|S|}}{(1+\epsilon)^k}\right)\\
 &=& \frac{11}{10}\left( \frac{1-\epsilon}{1+\epsilon}\right)^{|S|}.
\end{eqnarray*}

%%%%%%%%%%%%%%%%%%%%%%%%%%%%%%%%%%%%%%%%%%%%%%%%%%%%%%%%%%%%%%%%%%%%%%%%%%%%%%%%%%%%%%%%%%%%%%%%%%%%%%%%%%%%%%%%%%%%%%%%%%%%%%%
%%%%%%%%%%%%%%%%%%%%%%%%%%% EXPANSION   %%%%%%%%%%%%%%%%%%%%%%%%%%%%%%%%%%%%%%%%%%%%%%%%%%%%%%%%%%%%%%%%%%%%%%%%%%%%%%%%%%%%%%%
%%%%%%%%%%%%%%%%%%%%%%%%%%%%%%%%%%%%%%%%%%%%%%%%%%%%%%%%%%%%%%%%%%%%%%%%%%%%%%%%%%%%%%%%%%%%%%%%%%%%%%%%%%%%%%%%%%%%%%%%%%%%%%%
\subsection{Conductance}
In this subsection we will prove that $\phi (T)\geq \frac{1}{2}$ for any $T\subset V$ with $\left|T\right|<\frac{c}{k}\left|V\right|=\frac{c}{k}n$.  
Recall that $\phi(T)=\frac{w(T,\overline{T})}{w(T,V)}$ and let $T=T'\cup T''$.  
Because $\tilde{\phi}(T',T'') = \phi(T' \cup T'') + \frac{w(T',T') + w(T'', T'')}{w(T' \cup T'',V)}$, this will conclude the details of Example \ref{bipartite noisy hypercubes}.
We will require the following two lemmas.

\begin{lemma}
Let $T\subseteq [n]$ and define $\mathbbm{1}_T\in\mathcal{H}$ to be the characteristic function of $T$.  
Under these conditions,
$$\frac{1}{d_k^0} \langle \mathbbm{1}_T, A\mathbbm{1}_T \rangle  \leq \frac{11}{10}\sum_{S\subseteq [k]} \left(\frac{1-\epsilon}{1+\epsilon}\right)^{|S|}\left( \widehat{\mathbbm{1}_T}(S)\right)^2.$$
\end{lemma}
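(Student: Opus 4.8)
The plan is to diagonalize the adjacency operator $A$ of $G^{(o)}_{k,c}$ in the Walsh basis — which Lemma~\ref{eigenvalue} identifies as an eigenbasis — and then to compare the resulting quadratic form with the target bound eigenvalue by eigenvalue.

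First I would expand the characteristic function in the Walsh basis, $\mathbbm{1}_T = \sum_{S\subseteq[k]} \widehat{\mathbbm{1}_T}(S) W_S$, and apply $A$ term by term. By Lemma~\ref{eigenvalue} and linearity, $A\mathbbm{1}_T = \sum_{S\subseteq[k]} \widehat{\mathbbm{1}_T}(S)\,\rho_S\,W_S$. Since the $W_S$ are orthonormal with respect to $\langle\cdot,\cdot\rangle$, pairing this against $\mathbbm{1}_T = \sum_S \widehat{\mathbbm{1}_T}(S) W_S$ collapses the double sum to its diagonal:
$$\langle \mathbbm{1}_T, A\mathbbm{1}_T \rangle = \sum_{S\subseteq[k]} \widehat{\mathbbm{1}_T}(S)^2\,\rho_S .$$
Dividing by $d_k^o$ and using that $G^{(o)}_{k,c}$ is $d_k^o$-regular, so that $\rho_S/d_k^o = \overline{\rho_{|S|}}$, turns the left side of the lemma into $\sum_{S} \widehat{\mathbbm{1}_T}(S)^2\,\overline{\rho_{|S|}}$.

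It then remains to bound $\overline{\rho_{|S|}} \leq \frac{11}{10}\left(\frac{1-\epsilon}{1+\epsilon}\right)^{|S|}$ for every $S$ and sum against the nonnegative weights $\widehat{\mathbbm{1}_T}(S)^2$. For those $S$ with $\rho_S \geq 0$ this is exactly the computation displayed just before the lemma, which drops the (nonnegative) subtracted term in the numerator of $\overline{\rho_{|S|}}$ and uses $(1+\epsilon)^k - (1-\epsilon)^k \geq \tfrac{10}{11}(1+\epsilon)^k$, equivalent to the standing assumption $d_k^o \geq \tfrac{1}{2.2}(1+\epsilon)^k$. For those $S$ with $\rho_S < 0$ the inequality is immediate, since the right-hand side is nonnegative. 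Summing the per-$S$ inequalities yields the claim. The argument is essentially bookkeeping once Lemma~\ref{eigenvalue} is available; the only point deserving a moment's care is that the eigenvalue estimate must be applied term by term and checked to survive the sign change of $\rho_S$ near $|S| \approx k/2$, which it does trivially, so no real obstacle is expected.
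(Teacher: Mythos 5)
Your proposal is correct and follows essentially the same route as the paper's proof: expand $\mathbbm{1}_T$ in the Walsh basis, use Lemma \ref{eigenvalue} and orthonormality to diagonalize $\frac{1}{d_k^o}\langle \mathbbm{1}_T, A\mathbbm{1}_T\rangle$ into $\sum_S \overline{\rho_{|S|}}\,\widehat{\mathbbm{1}_T}(S)^2$, and then apply the termwise bound $\overline{\rho_{|S|}} \leq \frac{11}{10}\left(\frac{1-\epsilon}{1+\epsilon}\right)^{|S|}$ established just before the lemma. Your remark about the sign of $\rho_S$ is fine but not even needed as a separate case, since that bound is obtained by dropping a subtracted nonnegative term and so holds for all $|S|$ uniformly.
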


\noindent\textit{Proof.}  
Let $T\subseteq [n]$.  
Recall that $A W_S = \rho_S W_S$, and so by (\ref{adj eig}) we have that $\frac{1}{d_k^o} AW_S = \overline{\rho_{|S|}} W_S \leq \frac{11}{10}\left( \frac{1-\epsilon}{1+\epsilon}\right)^{|S|}W_S$.
Using that the Walsh functions form an orthonormal basis, we observe:
\begin{eqnarray*}
\frac{1}{d_k^0} \langle \mathbbm{1}_T, A \mathbbm{1}_T\rangle 
			&=& \frac{1}{n} \sum_{x\in \{0,1\}^k} \mathbbm{1}_T(x) \frac{1}{d_k^o} A\mathbbm{1}_T(x)\\
			&=& \frac{1}{n}\sum_{x\in\{0,1\}^k} \left( \sum_{S\subseteq [k]} \widehat{\mathbbm{1}_T}(S)W_S(x)\right) \left( \sum_{S'\subseteq [k]} \widehat{\mathbbm{1}_T}(S')\frac{1}{d_k^o}AW_{S'}(x)\right)\\
			&=& \frac{1}{n}\sum_{x\in\{0,1\}^k} \left( \sum_{S\subseteq [k]} \widehat{\mathbbm{1}_T}(S)W_S(x)\right) \left( \sum_{S'\subseteq [k]} \widehat{\mathbbm{1}_T}(S')\overline{\rho_{|S'|}}W_{S'}(x)\right)\\
			&=& \sum_{S\subseteq [k]}\sum_{S'\subseteq [k]}\widehat{\mathbbm{1}_T}(S)\overline{\rho_{|S'|}} \widehat{\mathbbm{1}_T}(S')\left(\frac{1}{n} \sum_{x\in\{0,1\}^k}W_S(x)W_{S'}(x)\right)\\
			&=& \sum_{S\subseteq [k]} \overline{\rho_{|S|}} \left( \widehat{\mathbbm{1}_T}(S)\right)^2 \\
			&\leq& \sum_{S\subseteq [k]} \frac{11}{10}\left( \frac{1-\epsilon}{1+\epsilon}\right)^{|S|} \left( \widehat{\mathbbm{1}_T}(S)\right)^2 . \hfill\qed
\end{eqnarray*}

\begin{lemma}
Let $T\subseteq [n]$.  Then, 
$$w(T,\overline{T}) = w(T,V)-  n \langle \mathbbm{1}_T, A \mathbbm{1}_T\rangle.$$
\end{lemma}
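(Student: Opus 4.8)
The plan is to prove this as a purely definitional identity: unwind both sides and match them. The only point that demands care is the convention, fixed earlier in this section, that an edge with both endpoints in $A\cap B$ is counted twice in $E(A,B)$, and hence in $w(A,B)$.

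First I would evaluate the right-hand quantity. By the definition of the inner product on $\mathcal{H}$ and of the adjacency operator $A\mathbbm{1}_T(x)=\sum_{xy\in E}w_{xy}\mathbbm{1}_T(y)$, and using $w_{xy}=0$ when $xy\notin E$,
$$ n\langle \mathbbm{1}_T, A\mathbbm{1}_T\rangle \;=\; \sum_{x \in V}\mathbbm{1}_T(x)\,\big(A\mathbbm{1}_T\big)(x) \;=\; \sum_{x \in T}\ \sum_{y \in T} w_{xy} \;=\; w(T,T), $$
where the last equality is precisely the definition of $w(T,T)=W(T,T)$: since $T\cap T=T$, every edge with both endpoints in $T$ is counted twice, which is exactly the double count produced by letting both $x$ and $y$ range over $T$.

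Next I would note that $w(T,V)$ is the volume $\sum_{x\in T} d(x)$: an edge lies in $E(T,V)$ exactly when it has an endpoint in $T$, and it is counted twice precisely when both endpoints lie in $T\cap V=T$, which is also what $\sum_{x\in T}\sum_{xy\in E} w_{xy}$ records. Splitting each neighbour $y$ of a vertex $x\in T$ according to whether $y\in T$ or $y\in\overline{T}$ gives
$$ w(T,V) \;=\; \sum_{x\in T}\sum_{y\in T} w_{xy} \;+\; \sum_{x\in T}\sum_{y\in \overline{T}} w_{xy} \;=\; w(T,T) + w(T,\overline{T}), $$
the last term counted once since $T\cap\overline{T}=\emptyset$. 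Combining the two displays yields $w(T,\overline{T}) = w(T,V) - w(T,T) = w(T,V) - n\langle \mathbbm{1}_T, A\mathbbm{1}_T\rangle$, which is the claim. I do not expect any genuine obstacle; the entire content is keeping the factor-of-two bookkeeping consistent between $\langle \mathbbm{1}_T, A\mathbbm{1}_T\rangle$ and $w(T,T)$, and these two lemmas then feed directly into the conductance bound $\phi(T)=1-\tfrac{n\langle\mathbbm{1}_T,A\mathbbm{1}_T\rangle}{w(T,V)}$ used in the next subsection.
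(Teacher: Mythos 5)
Your proof is correct and follows essentially the same route as the paper: both reduce the claim to the identity $n\langle \mathbbm{1}_T, A\mathbbm{1}_T\rangle = w(T,T)$ by unwinding the inner product and the adjacency operator, and then use the decomposition $w(T,V) = w(T,T) + w(T,\overline{T})$. Your explicit attention to the double-counting convention for edges inside $T$ is a nice touch, but it adds nothing beyond what the paper's argument already implicitly relies on.
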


\noindent\textit{Proof.}  
Let $T\subseteq [n]$.  
Notice that what we really are proving is that $n \langle \mathbbm{1}_T, A \mathbbm{1}_T\rangle = w(T,T)$.  
Now consider

\begin{eqnarray*}
n\langle \mathbbm{1}_T, A\mathbbm{1}_T\rangle &=& \sum_{x\in\{0,1\}^k} \mathbbm{1}_T(x)A\mathbbm{1}_T(x)\\
&=& \sum_{x\in\{0,1\}^k} \mathbbm{1}_T(x) \left(\sum_{y\in N(x)} w(x,y) \mathbbm{1}_T(y)\right)\\
&=& \sum_{x \in T}\sum_{y \in T} w_{xy}\\
&=& w(T,T). \hfill\qed
\end{eqnarray*}

We are now ready to prove the main result from this section.

\begin{theorem}
The conductance $\phi (T)\geq \frac{1}{2}$ for any $T\subset V$ with $\left|T\right|<\frac{c}{k}\left|V\right|=\frac{c}{k}n$.
\end{theorem}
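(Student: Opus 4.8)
The plan is to rewrite $\phi(T)$ as a Fourier-analytic quantity and then close the estimate with the hypercontractive inequality (\ref{Bonami-Beckner}). First I would combine the two lemmas just proved: by the second lemma $w(T,\overline{T}) = w(T,V) - n\langle \mathbbm{1}_T, A\mathbbm{1}_T\rangle$, and since $G^{(o)}_{k,c}$ is $d_k^o$-regular we have $w(T,V) = |T|\,d_k^o$. Hence
\[
\phi(T) \;=\; \frac{w(T,\overline{T})}{w(T,V)} \;=\; 1 - \frac{n}{|T|}\cdot\frac{\langle \mathbbm{1}_T, A\mathbbm{1}_T\rangle}{d_k^o},
\]
so it suffices to bound the subtracted term by $1/2$.

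Next, the first lemma gives $\frac{1}{d_k^o}\langle \mathbbm{1}_T, A\mathbbm{1}_T\rangle \le \frac{11}{10}\sum_{S\subseteq[k]}\left(\frac{1-\epsilon}{1+\epsilon}\right)^{|S|}\widehat{\mathbbm{1}_T}(S)^2$. The key step is to feed the right-hand side into (\ref{Bonami-Beckner}) with the noise parameter $\eta$ chosen so that $\eta^2 = \frac{1-\epsilon}{1+\epsilon}$; the constraint $c \le \frac{10k}{22}$ forces $k/c \ge 2.2$, hence $\epsilon \le 1$ and $0 \le \eta \le 1$, so the inequality applies. Since $\mathbbm{1}_T$ is a $\{0,1\}$-valued indicator, $\|\mathbbm{1}_T\|_q^q = |T|/n$ for every $q$, and with $q = 1+\eta^2 = \frac{2}{1+\epsilon}$ this yields $\|\mathbbm{1}_T\|_{1+\eta^2}^2 = (|T|/n)^{1+\epsilon}$. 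Combining the two inequalities,
\[
\frac{n}{|T|}\cdot\frac{\langle \mathbbm{1}_T, A\mathbbm{1}_T\rangle}{d_k^o} \;\le\; \frac{11}{10}\cdot\frac{n}{|T|}\left(\frac{|T|}{n}\right)^{1+\epsilon} \;=\; \frac{11}{10}\left(\frac{|T|}{n}\right)^{\epsilon}.
\]

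Finally I would use the hypothesis $|T| < \frac{c}{k}n$ together with the definition $\epsilon = \frac{1}{\log_{2.2}(k/c)}$, which is calibrated precisely so that $(k/c)^{\epsilon} = 2.2$, i.e. $(c/k)^{\epsilon} = \frac{1}{2.2}$. Thus $\frac{11}{10}\left(\frac{|T|}{n}\right)^{\epsilon} < \frac{11}{10}\cdot\frac{1}{2.2} = \frac{1}{2}$, and plugging back into the first display gives $\phi(T) > 1 - \frac12 = \frac12$ (and $\ge \frac12$ in the boundary case $|T| = \frac{c}{k}n$), as required.

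The step I expect to require the most care is the choice of $\eta$ in (\ref{Bonami-Beckner}) and the exponent bookkeeping: one must verify that $\eta^2 = \frac{1-\epsilon}{1+\epsilon} \in [0,1]$ — which is exactly where $c \le \frac{10k}{22}$ is used — and that $2/(1+\eta^2)$ collapses to $1+\epsilon$, so that the factor $\frac{n}{|T|}$ is absorbed and one is left with a single power $\epsilon$ of $|T|/n$. Everything else is routine; the constants $\frac{11}{10}$ (coming from the degree bound $d_k^o \ge \frac{1}{2.2}(1+\epsilon)^k$ already embedded in the first lemma) and $2.2$ (the base of the logarithm defining $\epsilon$) are matched exactly so that their product is $\frac12$, which is what makes the threshold $\frac{c}{k}$ the correct one.
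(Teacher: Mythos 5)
Your proposal is correct and follows essentially the same route as the paper: write $\phi(T) = 1 - \frac{n}{|T|d_k^o}\langle \mathbbm{1}_T, A\mathbbm{1}_T\rangle$ via the two lemmas, bound the Fourier-weighted sum by $\|\mathbbm{1}_T\|_{1+\eta^2}^2$ using (\ref{Bonami-Beckner}) with $\eta = \sqrt{\tfrac{1-\epsilon}{1+\epsilon}}$, and close with the calibration $(c/k)^\epsilon = 1/2.2$ so that $\tfrac{11}{10}\cdot\tfrac{1}{2.2} = \tfrac12$. Your exponent bookkeeping ($2/(1+\eta^2) = 1+\epsilon$) and the use of $c \le \tfrac{10k}{22}$ to ensure $\eta \in [0,1]$ match the paper's argument exactly.
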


\noindent\textit{Proof.}  
Let $T\subseteq [n]$ be such that $|T|\leq\frac{c}{k}n$.  
Then, 
$$\phi(T) = \frac{w(T,\overline{T})}{w(T,V)}= 1-\frac{n}{|T|d^o_k} \langle \mathbbm{1}_T, A\mathbbm{1}_T\rangle.$$

By (\ref{Bonami-Beckner}) with $\eta = \sqrt{\frac{1-\epsilon}{1+\epsilon}}$, we have that 
\begin{eqnarray*}
\frac{n}{|T|d^o_k} \langle \mathbbm{1}_T, A\mathbbm{1}_T\rangle 
	& \leq & \frac{n}{|T|} \frac{11}{10}\sum_{S\subseteq [k]} \left(\frac{1-\epsilon}{1+\epsilon}\right)^{|S|}\left( \widehat{\mathbbm{1}_T}(S)\right)^2 \\
	& \leq & \frac{11n}{10|T|} \left\|\mathbbm{1}_T\right\|^2_{1+\eta^2}\\
	& = & \frac{11n}{10|T|} \left(\frac{|T|}{n}\right)^{2/(1+\eta^2)} \\
	& = & \frac{11}{10} \left( \frac{|T|}{n} \right)^\epsilon.
\end{eqnarray*} 

By choice of $\epsilon$, we have that 
\begin{eqnarray*}
\phi(T) &\geq & 1-\frac{11}{10} \left( \frac{|T|}{n} \right)^\epsilon \\
	& \geq & 1 - \frac{11}{10} \left( \frac{c}{k} \right)^\epsilon \\
 	& \geq & 1 - \frac{1}{2} \qed
\end{eqnarray*}

%%%%%%%%%%%%%%%%%%%%%%%%%%%%%%%%%%%%%%%%%%%%%%%%%%%%%%%%%%%%%%%%%%%%%%%%%%%%%%%%%%%%%%%%%%%%%%%%%%%%%%%%%%%%%%%%%%%%%%%%%%%%%%%
%%%%%%%%%%%%%%%%%%%% EMPIRICAL TEST %%%%%%%%%%%%%%%%%%%%%%%%%%%%%%%%%%%%%%%%%%%%%%%%%%%%%%%%%%%%%%%%%%%%%%%%%%%%%%%%%%%%%%%%%%%
%%%%%%%%%%%%%%%%%%%%%%%%%%%%%%%%%%%%%%%%%%%%%%%%%%%%%%%%%%%%%%%%%%%%%%%%%%%%%%%%%%%%%%%%%%%%%%%%%%%%%%%%%%%%%%%%%%%%%%%%%%%%%%%
\section{Empirical Tests} \label{empirical section}

%%%%%%%%%%%%%%%%%%%%%%%%%%%%%%%%%%%%%%%%%%%%%%%%%%%%%%%%%%%%%%%%%%%%%%%%%%%%%%%%%%%%%%%%%%%%%%%%%%%%%%%%%%%%%%%%%%%%%%%%%%%%%%%
\subsection{An algorithm}
We do not recommend trying to implement the argument in Section \ref{NBH} for real-world use.
The construction was optimized for rigorous bounds at the cost of efficiency and real-world performance.
We now present a modified version of the construction in the proof.
The outline of the construction is the same.
This modified version does not have any rigorous bounds, but it has good performance and does not require significant computational power.
We also take advantage of things observed by applied mathematicians.
For example, the theoretical proof partitions the points in spectral space greedily, which gives poor but rigorous bounds on concentration.
However, there is ample empirical evidence \cite{PSSMF} that the spectral space of real-world graphs are strongly clusterable.
Also, when we project down to one dimension, we do not necessarily project down onto one axis.
By examining the $(V_{17}, V_{18})$ plot in Figure $2(a)$ of \cite{PSSMF}, we see that some communities are best detected using a combination of several eigenvectors.

For each $x \in C_i'$, we define $x_{c'} = x$ if $\|x_{c'} - c'\| \leq \|x_{c'} + c'\|$ and $x_{c'} = -x$ otherwise.
We define $d'(x, y) = \min\{\|x-y\|_2, \|x + y\|_2\} = d_M(x,y)$.

\bigskip

\noindent \textbf{Pseudo-Code:} Inputs: $k$, $G$, $r$, $t$, $t'$.  Outputs: $S_1, S_1', S_2, S_2', \ldots, S_r, S_r'$.
\begin{enumerate}
	\item Calculate $F$, and throw out all points at the origin.
	\item Let $F'(u) = F(u)/\|F(u)\|$ and let $X \subset \mathbb{R}^k$ be the range of $F'$.
	\item Calculate $r$ random centers $c_i^{(0)}$ such that $\|c_i^{(0)}\|=1$.
	\item Run $r$-means.  For $j=1\ldots,t'$:
	\begin{enumerate}
		\item Initialize each cluster $C_i = \emptyset$ for $1 \leq i \leq r$.
		\item For each point $x \in X$, 
		\begin{enumerate}
			\item Find the value of $i^*$ such that $d'(x, c_{i^*}^{(j-1)})$ is minimum.  
			\item If $d'(x, c_{i^*}^{(j-1)}) < 2^{-1/2}$, then assign $x \rightarrow C_{i^*}$, otherwise set $x \rightarrow C_{r+1}$.
		\end{enumerate}
		\item Calculate the centers: for $1 \leq i \leq r$, 
		\begin{enumerate}
			\item If $C_{i}$ is nonempty, calculate $c_{i}^{(j)} = \frac{\sum_{x \in C_i}\mathcal{M}(x)x_{c_i^{(j-1)}}}{\|\sum_{x \in C_i}\mathcal{M}(x)x_{c_i^{(j-1)}}\|}$.
			\item If $C_i = \emptyset$, set $c_{i}^{(j)}$ to equal a random point $x \in C_{r+1}$.  Leave $C_i$ empty.
		\end{enumerate}
		\item Repeat (b) and (c) as necessary.
	\end{enumerate}
	\item For $i=1,\ldots,r$, if $C_i$ is non-empty do:
	\begin{enumerate}
		\item For each vertex $u\in C_i$, calculate $z_u = F(u)^T c_{i}^{(t')}$.
		\item Find appropriate $n^*<0$ and $p^*>0$ as thresholds.
		\item Set $S_i = \{u \in C_i : z_u \geq p^*\}$ and $S_i' = \{v \in C_i : z_v \leq n^*\}$.
	\end{enumerate}
\end{enumerate}

%%%%%%%%%%%%%%%%%%%%%%%%%%%%%%%%%%%%%%%%%%%%%%%%%%%%%%%%%%%%%%%%%%%%%%%%%%%%%%%%%%%%%%%%%%%%%%%%%%%%%%%%%%%%%%%%%%%%%%%%%%%%%%%
\subsection{Results}
Because finding the largest eigenvectors is an approximate algorithm, we will abuse notation by saying that a vector $v \in \mathbb{R}^k$ is ``at the origin'' if $\|v\| < 10^{-8}$.
Since $e_i$ denotes an eigenvector, we will use the notation $a_i$ to denote the unit vector that is $1$ in the $i^{th}$ coordinate and $0$ in all other coordinates.
When we say $x \approx 10a_2 - 12a_7$, we have represented the vector $x$ using an approximation by deleting any $a_i$ whose coefficient is less than $5$.
To find appropriate values for $n^*$ and $p^*$, we tested every pair of values under two conditions:
\begin{enumerate}
	\item when $2 \leq |S_i|,|S_i'|\leq30$, or
	\item $2 \leq |S_i|,|S_i'| \leq 3000$ and $n^* = p^*$
\end{enumerate}
 and used the pair of values that produced the smallest bipartite conductance.

Recall that the bipartite conductance from our $i^{\mbox{th}}$ strongest community must be at least $2 - \lambda_{n+1-i}$.
We will use this to compare our communities to the ``best possible'' based on the eigenvalues that we calculate.
It has been commented that the ``best'' theoretical bounds \cite{PM,LRTV} for community detection use linear programming for a continuous relaxation instead of spectral methods.
The best bounds from linear programming are $O(\sqrt{\log(n)})$.
In the graphs we encountered, the spectral values are not very small, and therefore the bounds from the eigenvalues performed much better in practice.

Our algorithm was run on four data sets below: a biological network, the hyperlink structure between a set of websites, a traffic routing network for telecommunication companies, and relationships between fictional characters.
Our heuristic algorithm found bipartite communities where the $i^{\mbox{th}}$ best community has bipartite conductance less than $10(2-\lambda_i)$.
This is significantly better than the bound in Theorem \ref{main}, and borders on the best possible.
Despite that bipartite conductance is larger than conductance on the same vertex set, the second and third best bipartite communities found by our algorithm had lower bipartite conductance than the conductance of the second and third best classical communities found by a standard algorithm in the telecommunications network!

Our algorithm found communities with relevant structure in all but the biological network.
On political blogs, our algorithm found the Authority/Hub framework first described by Kleinberg \cite{K}.
On telecommunication networks, our algorithm found a community local to a regional network (Korea) rather than the dense formation at the logical center.
Furthermore, the two sets of the community provided information about the peering relationship.
This can be used to infer the \emph{level} of a telecommunications company, which approximates how close it is to the logical center of the Internet.
Information about levels can be used to efficiently route traffic \cite{BPK} by idealizing the network as a hyperbolic space.
Hence our results do not just score well; they have qualitative significance too.

\bigskip

\noindent \textbf{Double Mutant Combinations}

Costanzo et. al. \cite{Minnesota2} prepared a data set of how a colony of yeast would react when a pair of genes were deleted, which is available at the supplementary online material website \cite{Minnesota Data}.
This is the data set discussed in Section 1 when the difference between a bicluster and a bipartite community is clarified.
A yeast colony typically grows at a rate of $t$, and when gene $i$ is mutated it grows at rate $\delta_i t$.
The double mutant combinations is then an analysis of when genes $i$ and $j$ are deleted and the yeast colony grows at a rate of $(\delta_i\delta_j + \epsilon_{ij})t$.
We specifically worked with data set $S4$, where edge $ij$ exists if the experimental value of $|\epsilon_{ij}|$ is more than $0.08$, and the $p$-value for the true value of $\epsilon_{ij}$ equaling $0$ is less than $0.05$.
We chose this specific data set because it was recommended to us by one of the authors, Chad Myers.

The experiment specifically only tested gene combinations with one gene from an array set $A$ and the other gene from a query set $Q$.
Both sets are large, with $|A| = 3885$ and $|Q| = 1711$.
We used the graph induced by the intersection of the two lists, where $|A \cap Q| = 1139$.
This induced subgraph has $33821$ directed edges.
There were $1719$ edges that were close to the cut-off threshold and were only represented in one direction.
We chose to include an undirected edge if either orientation of it exists in the directed graph; this produced $17770$ undirected edges without multiplicities.

We originally ran our algorithm with $20$ eigenvectors.
However, the expected distance between two random unit vectors in $\mathbb{R}^k$ is $\approx \sqrt{1 - \frac{1}{k^2}}$.
In the end this space was too sparse, and none of the parts of the partitions contained more than $9$ genes.
We modified our algorithm to run on $6$ eigenvectors, and we only required the radius for each partition to be $\sqrt{2}$ instead of $\frac{1}{\sqrt{2}}$.
This change was unique to this data set.  The basic facts about the eigenvalues can be seen in the table below.

\[\begin{array}{c|c|c|c}
i	&2 - \lambda_{n+1-i} 		& \# (f_i(u) > 10^{-4})	 & \# (-f_i(u) > 10^{-4}) \\
\hline
1	&0.584549		&	523			&	519			\\				
2	&0.595037		&	656			&	464			\\									
3	&0.605708		&	662			&	455			\\
4	&0.62266		&	584			&	536			\\
5	&0.633406		&	550			&	562			\\	 				
6	&0.644139		&	587			&	529								
\end{array}\]

There was one vertex at the origin that was thrown out.
As you can see, this graph has no good bipartite communities.
We ran $r$-means to find three clusters.
In every case, $n^* = p^* = 0$ (and so $|S_i| = \#(z_j > 0)$ and $|S_i'| = \#(z_j < 0)$).

\[ \begin{array}{c|c|c|c|c}
\mbox{i} 	& 100c_i	 					& \#(z_j > 0)		& \#(z_j < 0)	&\tilde{\phi}_G(S_i, S_i') 			\\
\hline
1   		& -5a_1 + 48a_2 - 53a_3 - 13a_4 -13a_5 - 66a_6		& 166			& 171		&0.740774		\\
2		& 72a_1 + 54a_2 + 43a_3 - 8a_4 - 10a_5			& 231 			& 122		&0.702708		\\
3		& 7a_1 - 23a_2 - 96a_4 + 5a_5				& 179 			& 269		&0.656735		
\end{array}\]

The behavior is clear: this graph has no bipartite communities and so the algorithm is spreading the communities out to try to cover the entire graph.
Recall that we originally established that communities are vaguely defined terms.
Conductance is only one measure of a ``good community.''
Our algorithm has proven that this is not the correct method for this data set.
Conductance is a measure that wants the community to be exclusive, while the \emph{modular hypothesis} \cite{Minnesota} suggests that each module may be in many communities with other modules.
Hence their algorithm to find bipartite communities only counts good edges, while not significantly penalizing for bad edges.
Based on our discussion in Section 1, it may be better for this application to use $L'$ instead of $L$.

There is some silver lining to this result - it does fix one of the trade offs mentioned in the Discussion section of \cite{Minnesota} .
Their methods had very strict conditions for when a set of vertices formed a bipartite community.
Those conditions led to very small communities: they reported a mode of $11$ genes per community, and it appears that none have more than $110$ genes (see Figure S5 in supplementary materials).
These communities do not cover entire pathways, and an ad-hoc procedure is developed to reduce overlapping communities into one single subset of a pathway.
Our algorithm naturally looked for larger communities, and each likely contains entire pathway(s).

\bigskip

\noindent \textbf{Political Blogs}

We have a graph with $1490$ blogs that focus on political matters.
This data set was originally put together by Adamic for a paper by Adamic and Glance \cite{AG}; we found it on a list of data sets maintained by Mark Newman \cite{N}.
The name of each blog is given, and a value is given for whether the blog has a liberal or conservative bias.
There were $758$ blogs with a liberal bias and $732$ with a conservative bias.
The graph contains an unweighted directed multi-edge from blog $a$ to blog $b$ for each time blog $a$ contains a hyperlink to blog $b$.
We turn this into a weighted undirected simple graph, where the weight on edge $ab$ is the number of directed edges in the original graph from $a$ to $b$ or from $b$ to $a$.

The normalized Laplacian of our new graph has $2$ as an eigenvalue with multiplicity $1$.
The maximum eigenvector is nonzero in just two coordinates, at blogs ``digital-democrat.blogspot.com'' and ``thelonedem.com,'' each of which is in just one edge with weight $2$ to the other.
We call this the \emph{trivial bipartite community}.
The graph also has $266$ isolated vertices (blogs that never linked or were linked by other blogs).
We remove the isolated blogs and the blogs in the trivial community, and continue on the reduced graph.
We call our reduced graph the \emph{blog graph}.
We present the basic facts about the eigenvalues below.

\[\begin{array}{c|c|c|c}
i	&2 - \lambda_{n+1-i} 		& \# (f_i(u) > 10^{-4})	 & \# (-f_i(u) > 10^{-4}) \\
\hline
1	&0.235069		&	7			&	4			\\				
2	&0.286025		&	101			&	70			\\									
3	&0.289204		&	55			&	54			\\
4	&0.367392		&	184			&	190			\\
5	&0.404912		&	304			&	808			\\	 				
6	&0.412217		&	605			&	394								
\end{array}\]

There were no vertices at the origin that were thrown out.
We ran $r$-means to find three clusters.

\[ \begin{array}{c|c|c|c|c|c|c|c|c}
\mbox{i} 	& 100c_i	 			& \#(z_j > 0)		& \#(z_j < 0)	& p^*		&|S_i|		&n^*		&|S_i'|		&\tilde{\phi}_G(S_i, S_i') 			\\
\hline
1   		& 6a_4 + 88a_5 + 47a_6			& 97 			& 260		&0.111		&2		&-0.0659	&9		&0.535		\\
2		& -47a_5 + 88a_6			& 447 			& 156		&3.67(10^{-5})	&447		&-4.83(10^{-5})	&156		&0.484		\\
3		& -100a_2				& 27 			& 10		&0.00283	&7		&-0.0137	&2		&0.771			
\end{array}\]

The communities found by our algorithm are somewhat strong, with $\tilde{\phi}_G(S_i, S_i') \leq 3 (2 - \lambda_i)$.
We will assess our algorithms ability to pass the ``eye-test'' by finding expected structures inside our reported communities.

Based on previous applications of bipartite communities mentioned in Section 1, we know of two possible structures that might be expected in our communities.
One is that we might see group-versus-group antagonistic behavior (called a \emph{flame war}), which would be represented by many links between blogs from different political parties.
The second structure is the Authority-Hub framework suggested by HITS, which would be represented by a uniform orientation of the original links.
Another sign of the Authority-Hub framework is that one side should have a large in-degree and the other side should have a large out-degree.
We will now define a few parameters that will help us assess whether or not these structures are present in our results.
Let $\FLAME$ denote the ratio of edges that involve a blog from each political party among all edges that cross from $S_i$ to $S_i'$.
Let $d^+$ denote the average out-degree and $d^-$ denote the average in-degree based on the hyperlink orientations in the original data set.
Finally, let $H_{\mbox{value}}$ denote ratio of edges, among all edges that cross from $S_i$ to $S_i'$, that are oriented from a blog with a positive projection score to a blog with a negative projection score.  
Because this would be $0.5$ in a random graph, we set $H_{\mbox{score}} = 4*(0.5 - H_{\mbox{value}})^2 \in [0,1]$.
By this construction, a large $H_{\mbox{score}}$ would indicate a strong Authority/Hub structure without bias from which of $S_i$ and $S_i'$ is the set of Hubs and which is the set of Authorities.

First, we calculate these structural properties for the whole cluster $C_i$, before we calculate $n^*$ and $p^*$.
\[ \begin{array}{c|c|c|c||c|c||c|c}
\mbox{i}	&\FLAME		&H_{\mbox{value}}	&H_{\mbox{score}}		& z_j > 0	&				& z_j < 0	&	\\
	 	& 		& 			& 				& d^+		& d^- 				& d^+		& d^- \\
\hline
1		&0.061		&0.193775		&0.375095			&18.4433	&40.866				&14.2731	&6.4	\\
2		&0.045		&0.857111		&0.510113			&14.3289	&4.6868				& 15.0064	&41.8782	\\
3		&0.038		&0.435897		&0.0164366			&15.8519	&14.037				& 26.6		&15.8	
\end{array}\]

The first conclusion is that this algorithm did not pick up even a trace of a flame war.
Cluster 2, and to a lesser extent Cluster 1, do demonstrate an Authority-Hub framework.
Now we see how these parameters adjust when we restrict $C_i$ to $(S_i, S_i')$.
\[ \begin{array}{c|c|c|c||c|c||c|c}
\mbox{i}	&\FLAME		&H_{\mbox{value}}	&H_{\mbox{score}}		& S_i		&				& S_i'		&	\\
	 	& 		& 			& 				& d^+		& d^- 				& d^+		& d^- \\
\hline
1		&0		&0.9			&0.64				&10		&1.5				&0.888889	&1.33333	\\
2		&0.044		&0.857111		&0.510113			&14.3289	&4.6868				& 15.0064	&41.8782	\\
3		&0.167		&0.333333		&0.111111			&4.14286	&2				& 21.5		&9.5	
\end{array}\]

The parameters for the second community do not change because $S_2 \cup S_2' = C_2$.  
The first bipartite community now displays a very strong Authority/Hub structure, but the roles have reversed.

\bigskip

\noindent \textbf{Autonomous Systems}

Our next data set is the CAIDA relationships dataset for Autonomous Systems (AS) from November 12, 2007, which was downloaded from the Stanford SNAP project \cite{SNAP}, who received it from Leskovec, Kleinberg, and Faloutsos \cite{LKF}.
We associated each Autonomous System identifier with a name using a table found at Geoff Huston's personal website \cite{H}.
An autonomous system is a communications company that routes Internet traffic.  
The data represents a collection of inter-company connections used for routing traffic through several in-between carriers.
From the information provided, we created an unweighted undirected graph.
Our dataset also includes information about the type of relationship (customer, provider, or peer) that two linked companies have, which we choose to ignore until we perform an autopsy on our results.

The graph contains AS's $1$ through $65535$.
However, as one AS buys another, or some AS disappears for any other reason, only half of the AS's in that range were active at the time our graph was made.
Specifically, $39146$ of those addresses were not in any relationships, and so we removed them.
We clustered using the top twenty eigenvalues, none of which had trivial eigenvectors.
We describe the results below.

\[\begin{array}{c|c|c|c}
i	&2 - \lambda_{n+1-i} 		& \# (f_i(u) > 10^{-4}) & \# (f_i(u) < -10^{-4})\\
\hline
1	& 0.0112			& 40		& 	96				\\
2	&0.0376		 		& 295 		& 	657				\\
3	& 0.0397			& 432		& 	253				\\
4	&0.0562		 		& 1860 		& 	2693				\\ 
5	&	0.0598			&182		& 	339				\\
6	&0.0643				& 3322 		& 	8148				\\
7	& 	0.0644			&  1147 	& 	 1635				\\
8	&	0.0648	 		&1834 		& 	 1723				\\
9	&	0.0656			& 2043 		& 	1340				\\
10	&0.0661		 		& 1015 		& 	841				\\
11	&0.0673				&2070 		& 	1002				\\
12	&0.0697				&660		& 	1261				\\
13	&0.0703				&964 		& 	1201				\\
14	&0.0725				&1556 		& 	3634				\\
15	&0.0738				&900		& 	1741				\\
16	&	0.0784			& 755		& 	967				\\
17	&	0.0801			&226		& 	 780				\\
18	&	0.0804			&1935 		& 	1533				\\
19	&	0.0804			&1702 		& 	2728				\\
20	&	0.0816			&1031 		&  1083
\end{array}\]

\[ \begin{array}{l|l}
\mbox{i} 	& 100c_i \\
\hline
1   		& 8a_6 	-6a_8 + 93a_{14} + 22a_{15} - 11a_{18} - 21a_{19}	\\
2		& 66a_4 - 48a_6 + 16a_7 + 34a_8 - 10a_{10} + 20a_{11} + 8a_{12} + 14a_{13} \\
			& - 7a_{14} + 7a_{17} + 8a_{18} + 30a_{19}		\\
3		& 	-29a_3 -7a_6-95a_{13} 		\\
4		& 	7a_2 -94a_{14} + 34a_{15} 	\\
5		& 	15a_4 + 7a_5 - 68a_6 + 20a_7 + 17a_8 - 6a_{10} + 20a_{11} + 26a_{13} \\
			& - 48a_{14} - 19a_{16} + 23a_{19}	\\
6		& 	7a_5 - 87a_6 + 9a_7 - 8a_8 + 20a_{13} + 36a_{14} + 11a_{15} \\
			& - 16a_{18} - 9a_{19} - 5a_{20}	\\
7		& 	-5a_3 - 15a_4 - 9a_5 + 86a_6 - 15a_7 - 13a_8 - 11a_{11} - 31a_{13} \\
			&+ 13a_{14} + 8a_{16} + 6a_{18} - 20a_{19}		\\
8		& 	18a_2 + 33a_4 + 7a_5 - 30a_6 + 7a_7 + 35a_8 + 22a_{9} - 13a_{10} + 8a_{11} \\
			& + 9a_{13} - 62a_{14} + 23a_{15} + 5a_{17} + 9a_{18} + 32a_{19} + 5a_{20}	\\
9		& 	-94a_6 + 10a_7 - 7a_8 + 7a_{13} - 24a_{18} + 17a_{19} 	\\
10		& 	15a_4 + 11a_5 - 55a_6 + 32a_7 + 20a_8 - 6a_{10} + 39a_{11} + 24a_{13} \\
			& - 24a_{14} - 45a_{16} + 18a_{19} + 11a_{20}				
\end{array}\]

\[ \begin{array}{c|c|c|c|c|c|c|c}
\mbox{i} 	&	 \#(z_j > 0)		& \#(z_j < 0)	& p^*			&|S_i|		&n^*			&|S_i'|		&\tilde{\phi}_G(S_i, S_i') 			\\
\hline
1		&  	413			& 631		&0.00118457		&117 		&-0.00123966		&	34	&0.296417				\\
2		& 	1598 			&	581	&2.93973(10^{-5})	&1593 		&-2.5241(10^{-5})	&571		&0.529198				\\
3		&	277			& 188		&0.0133517		&154 		&-0.0114952		&15 		&0.0920771				\\
4		&	240			&  204		&0.0328121		&5 		&-0.0344949		&53 		&0.117318				\\
5		&	4892			& 609		&0.00041631		&3 		&-0.000375232		&2 		&	0.333333				\\
6		&	155			&	487	&0.000342043		&13		&-0.00036664		&102		&0.152				\\
7		&	737			&	2283	&0.000480553		&2 		&-0.000514521		&2 		&0.333333				\\
8		&	285			& 304		&0.000273189		&34 		&-0.00027188		&147 		&	0.218018			\\		
9		&	840			& 945		&0.0110021		&39 		&-0.00784598		&532		&	0.06838			\\
10		&	1299			& 136		&0.000264905		&5		&-0.00028486		&6		&0.294118
\end{array}\]

The algorithm returned at least one weak community ($\tilde{\phi}_G(S_2, S_2') > 0.5$) and several trivial communities ($|S_5 \cup S_5'| = 5$ and $|S_7 \cup S_7'| = 4$).
However, the seven other communities are within $7$ times the best possible, and some of them are within $4$ times the best possible.
As a comparison, we also ran classical community detection algorithms on the graph.
We made only two changes to our heuristic algorithm to do this: we calculated the smallest eigenpairs of $L$ and used standard distance Euclidean distance instead of $d'$.
Note that if we skip step 5 and just returned $C_i$ as the community, then this algorithm would be equivalent to the one developed by Ng, Jordan, and Weiss \cite{NJW}.
To avoid confusion, the set of vertices forming the $i^{\mbox{th}}$ classical community will be denoted $S_i^*$.
As a fair comparison, we also tested for the $20$ smallest eigenvalues and clustered for the best $10$ clusters.

\[\begin{array}{c|c|c|c}
i	&\lambda_{i}	 		& \# (f_i(u) > 10^{-4}) & \# (f_i(u) < -10^{-4})\\
\hline
1	&0			&26389		& 	0				\\
2	&0.0112			& 184		& 	738				\\
3	&0.0178			&3313 		& 	22998				\\
4	&0.0195			&1569 		& 	24508				\\
5	&0.0222			& 24731 	& 	1290				\\
6	&0.0267			&22781 		&  	3017				\\
7	& 0.0297		& 22530 	& 	 3427				\\
8	&0.0359	 		& 18197 	& 	 2383				\\
9	& 0.0399		&1012 		& 	2644			\\
10	&0.0416	 		& 6204  	& 	19671			\\ 
11	&0.0430			&17607		& 	4359				\\
12	&0.0433			& 21238		& 	2861			\\
13	& 0.044			&  815	 	& 	2214			\\
14	&0.0446	 		&20494		& 	 2755				\\
15	&0.0512			& 18730 	& 	2530				\\
16	&0.0577	 		& 5370	 	& 	19274			\\
17	&0.0591			&4025 		& 	11216			\\
18	&0.0605			&4448 		& 	13195				\\
19	&0.0627			&7134 		& 	17049			\\
20	&0.0636			&18480 		& 	4614				\\
\end{array}\]

\[ \begin{array}{l|l}
\mbox{i} 	& 100c_i \\
\hline
1   		& 86a_1 - 15a_3 - 7a_4 + 13a_5 + 11a_6 + 26a_7 + 6a_8 + 10a_{10} \\
			& + 12a_{14} + 22a_{16} - 16a_{19} - 14a_{20}	\\
2		& 86a_1 - 9a_3 - 10a_4 + 8a_7 -21a_{10} - 14a_{16} - 12a_{17} \\
			& + 38a_{19} + 6a_{20} \\
3		& 14a_1 + 12a_7 + 5a_8 + 92 a_{10} - 12a_{11} - 16a_{12} \\
			& - 6a_{13} - 24a_{14} - 7a_{16} \\
4		& 49a_1 - 11a_3 + 10a_5 + 10a_6 + 23a_7 + 6a_8 + 10a_{10} + 5a_{12} \\
			& + 16a_{14} + 27a_{16} - 6a_{17} - 9a_{18} - 63a_{19} - 38a_{20}\\
5		& 94a_1 - 12a_3 - 9a_4 +9 a_5 + 7a_6  + 14a_7 - 16a_{10} + 7a_{14} \\
			& - 9a_{16} - 7a_{17} + 8a_{19} \\
6		& 31a_1 - 6a_3 +6 a_5 +6a_6 + 11a_7 + 9a_{10} + 7a_{14} + 62a_{16} \\
			& + 10a_{18} -25a_{19} - 63a_{20} \\
7		& 74a_1 - 15a_3 - 7a_4 + 13a_5 + 11a_6 + 28a_7- 23a_{10} +8a_{11} + 9a_{12}  \\
			&+ 23a_{14} + 6a_{15} - 26a_{16} - 8a_{17} - 10a_{18} - 24a_{19} + 22a_{20}\\
8		& 8a_1 -6a_8 -6a_{10} - 12a_{14}- 6a_{16} - 6a_{17} + 98a_{19} - 6a_{20}\\
9		& 91a_1 - 14a_3 - 7a_4 + 13a_5 + 10a_6 + 24a_7 - 14a_{10} + 6a_{11}  \\
			&+ 7a_{12} + 14a_{14} + 5a_{15} - 9a_{16} -8a_{19}\\
10		& 85a_1 - 16a_3 + -6a_4 + 14a_5	+ 12a_6 + 27a_7 + 5a_8 - 17a_{10} + 8a_{11} \\
			& + 10a_{12} + 19a_{14} + 7a_{15} - 13a_{16} - 6a_{18} - 14a_{19} + 5a_{20}
\end{array}\]

\[ \begin{array}{c|c|c|c|c|c|c|c}
\mbox{i} 	&	 |C_i|		& p^*			&|S_i ^*|		&\phi_G(S_i^*) 			\\
\hline
1		& 	 1690		& 0.00327541		&1605 		&0.225106			\\
2		& 	 1430		&0.00353641		&661		&0.24767				\\
3		&	597		&0.00870002		&592		&0.0534351			\\
4		&	528		&0.00469703		&441		&0.133382				\\
5		&	4390		&0.00325309		&2986		&0.43507				\\
6		&	200		&0.00515704		&180		&0.122349				\\
7		&	729		&0.0035845		&729 		&0.131891				\\
8		&	302		&0.0107398		&237		&0.101093		\\		
9		&	3197		&0.0033048		&2964		&0.600235			\\
10		&	8045		&0.00371243		&755		&0.445483
\end{array}\]

\begin{figure}
\begin{center}
\scalebox{0.18}{\includegraphics{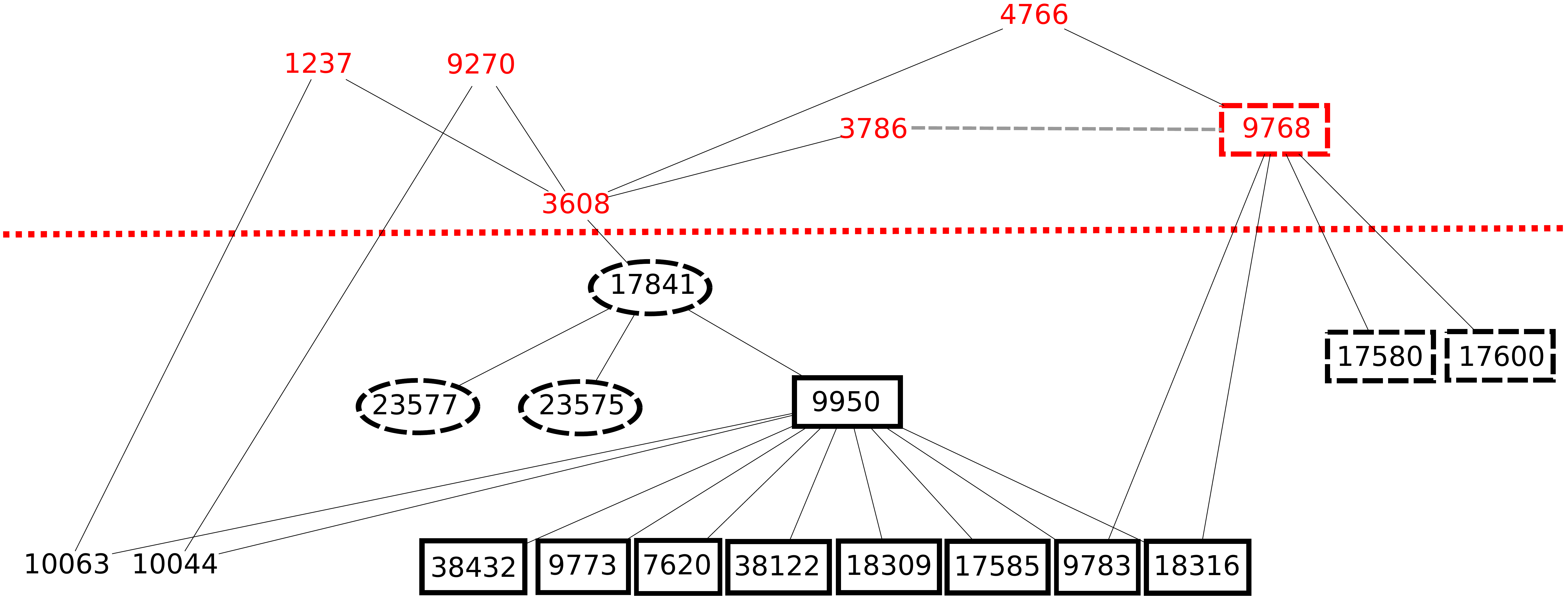}}
\caption{Dashed circles represent top members of $S_3^*$, dashed boxes represent top members of $S_9 \cup S_9'$, and solid boxes are top members of both.  ASN's below the dotted line have all of their relationships included in the diagram, each of the ASN's above the dotted line have at least one relationship not depicted here.  The diagram uses verticality to represent peering relationships, as lower ASN's are customers of higher ASN's.  The unique horizontal dashed line represents a peer-to-peer relationship.}
\label{ASN figure}
\end{center}
\end{figure}

We can immediately see that the continuous relaxation has stronger solutions for classical communities than for bipartite communities.
Specifically, there are six non-trivial eigenvalues less than $0.03$, while only one eigenvalue is at least $1.97$.
The classical algorithm also had no issue with trivial communities, as the smallest community returned with $180$ members.
However, the classical algorithm had more issues with weak communities than the bipartite algorithm; as it had three communities with classical conductance over $0.4$ compared to one community with bipartite conductance over $0.4$, and one community with classical conductance over $0.6$ compared to no communities with bipartite conductance that large.
The strongest communities from the two algorithms are quite comparable: the best classical community has a stronger score than the best bipartite community, the second and third best bipartite communities have stronger scores than the second and third best classical communities respectively, and the fourth best classical community is better than the fourth best bipartite community.

The communities discovered by the two algorithms are largely disjoint, with the notable exception of the best-scoring communities from each algorithm.
The twelve AS's in $S_3^*$ with the best $z_i$ score are in order: 
$$7620, 9773, 17585, 18308, 38122, 38432, 9950,   9783,    18316,  23575, 23577, \mbox{and } 17841.$$
The top two best scorers in $S_9$ are $9950$ and $9768$, while the top ten in $S_9'$ are:
$$18308, 17585, 38122, 38432, 7620, 9773, 9783, 18316, 17580, \mbox{and } 17600.$$
All of the AS's listed above are based in Korea.
A diagram of the connections between these AS's is presented in Figure \ref{ASN figure}.
The diagram demonstrates that the difference between $S_9$ and $S_9'$ contains information about peering relationships.

\bigskip

\noindent \textbf{MARVEL Characters}

We have a graph with $6486$ characters and $12941$ journals owned by publisher MARVEL.
This data was put together by Alberich, Miro-Julia, and Rosell\'{o} \cite{AMR}, and we found it on the Amazon Web Services \cite{AMAZON} list of large data sets.
The graph is bipartite; a character is linked to a journal title if the character appears in that journal.
From this, we create a different undirected unweighted graph.
Each vertex corresponds to a character, and the two characters are adjacent if there exists a journal that they both appear in.
We call our new graph the \emph{MARVEL graph}.

Among the largest eleven eigenvalues of the normalized Laplacian of the MARVEL graph, there are eigenvalues $2$ and $1.5$, with multiplicities $1$ and $9$, respectively.
It is well known that the multiplicity of $2$ as an eigenvalue corresponds to the number of bipartite components in the graph.
In this case, the MARVEL graph has one bipartite component, and it is one edge between the characters ``MASTER OF VENGEANCE'' and ``STEEL SPIDER/OLLIE O.''
The space of eigenvectors with eigenvalue $1.5$ can be generated by vectors $(v_1, \ldots, v_9)$, where each $v_i$ is non-zero in exactly two coordinates.
Furthermore, each non-zero coordinate of $v_i$ corresponds to a vertex with degree two, and the vertices are adjacent.  
As an odd structural motif, each of the $9$ pairs of vertices have a common neighbor.
We call these ten bipartite communities the trivial communities of the MARVEL graph.
We display information for the ten largest eigenvalues for the non-trivial communities below.
\[\begin{array}{c|c|c|c}
i	&2 - \lambda_{n+1-i} 		& \# (f_i(u) > 10^{-4})	 & \# (-f_i(u) > 10^{-4}) \\
\hline
1	&0.419864		&	36			&	19			\\				
2	&0.532353 		&	24			&	39			\\									
3	&0.551096		&	28			&	83			\\
4	&0.558939		&	25			&	40			\\
5	&0.60149		&	49			&	49			\\	 				
6	&0.606019		&	60			&	46			\\					
7	&0.625912		&	293			&	1132			\\					
8	&0.638269		&	558			&	161			\\					
9	&0.650475		&	4468			&	1503			\\					
10	&0.65465		&	571			&	2886			\\					
\end{array}\]

Using the above ten eigenvectors, we threw out $17$ vertices at the origin in addition to the deleted trivial communities.
We found four clusters among the remaining vertices.
The centers are dominated by just a few of the eigenvectors, and those eigenvectors are the ones with many non-zero coordinates.
The basic stats of the clusters are listed below.

\[ \begin{array}{c|c|c|c|c|c|c|c|c}
\mbox{i} 	& 100c_i	 			& \#(z_j > 0)		& \#(z_j < 0)	& p^*		&|S_i|		&n^*		&|S_i'|		&\tilde{\phi}_G(S_i, S_i') 			\\
\hline
1   		& -5a_7 + 99a_9 - 12a_{10}		& 3987 			& 1294		&0.000820	&2586		&-0.000822	&411		&0.553		\\
2		& 31a_8	+ 94a_9 - 12a_{10}		& 448 			& 145		&0.000217	&395		&-0.000239	&90		&0.717			\\
3		& -a_7					& 82 			& 29		&0.0489		&3		&-0.0901	&2		&0.697			\\
4		& 12a_9 + 99a_{10}			& 62			& 76 		&0.0204		&5		&-0.0150	&16		&0.498	
\end{array}\]

By examining the eigenvalues, we conclude from this that the MARVEL graph simply does not have strong bipartite communities.
However, our algorithm did find bipartite communities with bipartite conductance that is within $4-30\%$ of best possible.

The point of using the MARVEL graph instead of the ubiquitous Hollywood graph is that we can de-anonymize the nodes and use an ``eye-test'' to see if the bipartite communities have any significance.
The descriptions of some of the characters in our bipartite communities are accessible by a quick internet search; some of the characters are too obscure to find their background.
Based on the characters whose backgrounds we were able to track down, our communities do have a cohesive theme.
Most of the top scorers in $S_1 \cup S_1'$ have a scientific or pseudo-scientific background (``ZABO,''	``PAST MASTER,'' ``DR. JOANNE TUMOLO,'' and ``DR. EDWIN HAWKINS''); characters in $S_1$ are villains and characters in $S_1'$ are side characters. 
Most of the top scorers in $S_2 \cup S_2'$ are from the ``Spiderman'' comics.
Two of the top five scorers in $S_2$ are villains (``BRAINSTORM'' and ``ROCKET RACER II''), and two others are minor characters (``SARAH CHAN'' and ``CLARICE BERNHARD'').
On the other side, the second and fourth highest scorers in $S_2'$ are Spiderman's wife and boss (``MARY WATSON-PARKER'' and ``J. JONAH JAMESON'').
All of the top scorers in $S_3 \cup S_3'$ involve the comic series surrounding the protagonist ``Dr. Strange.''
Furthermore, the top scorers in $S_3$ are different manifestations of Dr. Strange (``DR VINCENT STEVENS,'' ``STRANGE,'' ``NOBLE,'' and ``PARADOX'').
The characters in $S_3'$ include a villain (``SISTER NIL''), a love interest (``CLEA''), and a financial relationship (``AZOPARDI'').
The classical community formed by $S_4 \cup S_4'$ is centered on a setting called ``EARTH-9910,'' but we found no clear distinction between the characters in $S_4$ and the characters in $S_4'$.

\noindent \textbf{Acknowledgments.}  We would like to thank Geoffrey Sanders and Noah Streib for their helpful notes on this manuscript.  
We would also like to thank Randall Dahlberg for his support and assistance.
We are grateful for Shiping Liu's kind emails that filled in the holes of the literature review from earlier drafts of this manuscript.
Finally, we would like to thank Chad Myers for his help in finding and understanding the gene interaction data set.

%%%%%%%%%%%%%%%%%%%%%%%%%%%%%%%%%%%%%%%%%%%%%%%%%%%%%%%%%%%%%%%%%%%%%%%%%%%%%%%%%%%%%%%%%%%%%%%%%%%%%%%%%%%%%%%%%%%%%%%%%%%%%%%
%%%%%%%%%%%%%%%%%%%% BIBLIOGRAPHY %%%%%%%%%%%%%%%%%%%%%%%%%%%%%%%%%%%%%%%%%%%%%%%%%%%%%%%%%%%%%%%%%%%%%%%%%%%%%%%%%%%%%%%%%%%%%
%%%%%%%%%%%%%%%%%%%%%%%%%%%%%%%%%%%%%%%%%%%%%%%%%%%%%%%%%%%%%%%%%%%%%%%%%%%%%%%%%%%%%%%%%%%%%%%%%%%%%%%%%%%%%%%%%%%%%%%%%%%%%%%

\end{document}